\newcommand{\bP}{{\mathbb{P}}}
\newcommand{\bQ}{{\mathbb{Q}}}
\newcommand{\bR}{{\mathbb{R}}}
\newcommand{\bZ}{{\mathbb{Z}}}
\newcommand{\Bx}{{\mathbf{x}}}
  \newcommand{\G}{{\mathcal{G}}}
  \newcommand{\N}{{\mathcal{N}}}
  \newcommand{\R}{{\mathcal{R}}}
\renewcommand{\S}{{\mathcal{S}}}
\newcommand{\ord}{\operatorname{ord}}
\newcommand{\GL}{\operatorname{GL}}
\newcommand{\Aut}{\operatorname{Aut}}
\newcommand{\DD}{\mathbf{D}}
\newcommand{\CC}{\mathbf{C}}
\newcommand{\Rep}{\operatorname{Rep}}
\newcommand{\ep}{\varepsilon}
\newcommand{\lcm}{\operatorname{lcm}}
\newcommand{\upchi}{{\raise.35ex\hbox{$\chi$}}}
\newtheorem{theorem}{Theorem}[section]
\newtheorem{corollary}[theorem]{Corollary}
\newtheorem{lemma}[theorem]{Lemma}
\theoremstyle{definition}
\numberwithin{equation}{section}
\begin{document}

\title{On the representation of integers by binary forms}

\author{C.~L.~Stewart}
\address{Department of Pure Mathematics \\
University of Waterloo \\
Waterloo, ON\\  N2L 3G1 \\
Canada}
\email{cstewart@uwaterloo.ca}

\author{Stanley Yao Xiao}
\address{Mathematical Institute\\
University of Oxford \\
Oxford \\  OX2 6GG \\
United Kingdom}
\email{stanley.xiao@maths.ox.ac.uk}

\indent
\thanks{The research of the first author was supported in part by the Canada Research Chairs Program and by Grant A3528 from the Natural Sciences and Engineering Research Council of Canada.}

\subjclass[2010]{Primary 11D45, Secondary 11D59, 11E76}%
\keywords{binary forms, determinant method}%
\date{\today}

%%%%%%%%%%%%%%%%%%%%%%%%%%%%%%%%%%%%%%%%%%%%%%%%%%%%%%%%%%%%%%%%%%%

\begin{abstract} Let $F$ be a binary form with integer coefficients, non-zero discriminant and degree $d$ with $d$ at least $3$. Let $R_F(Z)$ denote the number of integers of absolute value at most $Z$ which are represented by $F$. We prove that there is a positive number $C_F$ such that $R_F(Z)$ is asymptotic to $C_F Z^{\frac{2}{d}}$. 
\end{abstract}

\maketitle

%%%%%%%%%%%%%%%%%%%%%%%%%%%%%%%%%%%%%%%%%
\section{Introduction}
\label{S1}
%%%%%%%%%%%%%%%%%%%%%%%%%%%%%%%%%%%%%%%%%

Let $F$ be a binary form with integer coefficients, non-zero discriminant $\Delta(F)$ and degree $d$ with $d \geq 2$. For any positive number $Z$ let $\R_F(Z)$ denote the set of non-zero integers $h$ with $|h| \leq Z$ for which there exist integers $x$ and $y$ with $F(x,y) = h$. Denote the cardinality of a set  $\S$ by $|\S|$ and put $R_F(Z) = |\R_F(Z)|$. There is an extensive literature, going back to the foundational work of Fermat, Lagrange, Legendre and Gauss \cite{Gau}, concerning the set $\R_F(Z)$ and the growth of $R_F(Z)$ when $F$ is a binary quadratic form; see \cite{BV}, \cite{Bue} and \cite{Car} for more recent treatments of these topics. For forms of higher degree much less is known. In 1938 Erd\H{o}s and Mahler \cite{EM} proved that if $F$ is irreducible over $\bQ$ and $d$ is at least 3 then there exist positive numbers $c_1$ and $c_2$, which depend on $F$, such that 
\[R_F(Z) > c_1 Z^{\frac{2}{d}}\]
for $Z > c_2$. \\ 

Put 
\begin{equation} \label{AF defn} A_F = \mu (\{(x,y) \in \bR^2 : |F(x,y)| \leq 1\}) \end{equation}
where $\mu$ denotes the area of a set in $\bR^2$. In 1967 Hooley \cite{Hoo1} determined the asymptotic growth rate of $R_F(Z)$ when $F$ is an irreducible binary cubic form with discriminant which is not a square. He proved that

\begin{equation} \label{Hooley result 1} R_F(Z) = A_F Z^{\frac{2}{3}} + O \left(Z^{\frac{2}{3}} (\log \log Z)^{-\frac{1}{600}}\right). \end{equation}
In 2000 Hooley \cite{Hoo2} treated the case when the discriminant is a perfect square. Suppose that
\[F(x,y) = b_3 x^3 + b_2 x^2 y + b_1 xy^2 + b_0 y^3.\]
The Hessian covariant of $F$ is
\[q_F(x) = Ax^2 + Bx + C,\]
where
\[A = b_2^2 - 3 b_3 b_1, \text{ } B = b_2 b_1 - 9 b_3 b_0 \text{ and } C = b_1^2 - 3b_2 b_0.\]
Put
\begin{equation} \label{Hooley m} m = \frac{\sqrt{\Delta(F)}}{\gcd(A,B,C)}.\end{equation}
Hooley proved that if $F$ is an irreducible cubic with $b_1$ and $b_2$ divisible by 3 and $\Delta(F)$ a square then there is a positive number $\gamma$ such that 
\begin{equation} \label{Hooley result 2} R_F(Z) = \left(1 - \frac{2}{3m}\right) A_F Z^{\frac{2}{3}} + O \left(Z^{\frac{2}{3}} (\log Z)^{-\gamma}\right). \end{equation}

We remark that if $F$ is a binary cubic form then
\[\lvert \Delta(F) \rvert^{\frac{1}{6}} A_F = \begin{cases} \dfrac{3 \Gamma^2(1/3)}{\Gamma(2/3)} & \text{if } \Delta(F) > 0, \\ \\ \dfrac{\sqrt{3} \Gamma^2(1/3)}{\Gamma(2/3)} & \text{if } \Delta(F) < 0, \end{cases}\]
where $\Gamma(s)$ denotes the gamma function. In \cite{BE} Bean gives a simple representation for $A_F$ when $F$ is a binary quartic form. \\

Hooley \cite{Hoo1-2} also studied quartic forms of the shape
\[F(x,y) = ax^4 + 2bx^2 y^2 + cy^4.\]
Let $\ep > 0$. He proved that if $a/c$ is not the fourth power of a rational number then
\begin{equation} \label{Hooley result 3} R_F(Z) = \frac{A_F}{4} Z^{\frac{1}{2}} + O_{F,\ep} \left(Z^{\frac{18}{37} + \ep}\right). \end{equation}
Further if $a/c = A^4/C^4$ with $A$ and $C$ coprime positive integers then
\begin{equation} \label{Hooley result 4} R_F(Z) = \frac{A_F}{4} \left(1 - \frac{1}{2AC}\right) Z^{\frac{1}{2}} + O_{F,\ep} \left(Z^{\frac{18}{37} + \ep}\right).\end{equation}

In addition to these results, when $d>2$ and $F$ is the product of $d$ linear forms with integer coefficients Hooley \cite{Hooley1}, \cite{Hooley} proved that there is a positive number $C_F$ such that for each positive number $\ep$
\begin{equation} \label{rep by Hooley} R_F(Z) = C_F Z^{\frac{2}{d}} + O_{F,\ep} \left(Z^{\eta_d + \ep}\right),\end{equation}
where $\eta_3$ is ${\frac{5}{9}}$ and $\eta_d$ is ${\frac{2}{d}}-{\frac{d-2}{d^2(d-1)}}$ if $d>3.$

Further, Browning \cite{B2}, Greaves \cite{Gre}, Heath-Brown \cite{HB3}, Hooley \cite{Hoo1-1}, \cite{Hool1}, \cite{Hool2}, Skinner and Wooley \cite{SWoo} and Wooley \cite{Woo} have obtained asymptotic estimates for $R_F(Z)$ when $F$ is of the form $x^d + y^d$ with $d \geq 3$. Furthermore, Bennett, Dummigan and Wooley \cite{BDW} have obtained an asymptotic estimate for $R_F(Z)$ when $F(x,y) = ax^d + by^d$ with $d \geq 3$ and $a$ and $b$ non-zero integers for which $a/b$ is not the $d$-th power of a rational number. \\

For each binary form $F$ with integer coefficients, non-zero discriminant and degree $d$ with $d\geq 3$ we define $\beta_F$ in the following way. If $F$ has a linear factor in $\bR [x]$ we put

\begin{equation} \label{Beta D} \beta_F = \begin{cases} \dfrac{12}{19} & \text{if } d = 3 \text{ and } F \text{ is irreducible} \\ \\
\dfrac{1}{2} & \text{if } d = 3 \text{ and } F \text{ is reducible} \\ \\
\dfrac{3}{(d-2)\sqrt{d} + 3} & \text{if } 4 \leq d \leq 8 \\ \\ 
\dfrac{1}{d-1} & \text{if } d \geq 9. \end{cases}
\end{equation}
If $F$ does not have a linear factor over $\bR$ then $d$ is even and we put

\begin{equation} \label{Beta DD} \beta_F = \begin{cases} \dfrac{3}{8} & \text{if } d = 4 \\ \\
\dfrac{1}{2\sqrt{6} } & \text{if } d=6\\ \\ 
\dfrac{1}{d-1} & \text{if } d \geq 8. \end{cases}
\end{equation}

We remark that $\beta_F$ only depends on the \emph{real splitting type} of $F$, rather than any particulars of the coefficients of $F$. \\

We shall prove the following result.

\begin{theorem} \label{Rep by BF MT} Let $F$ be a binary form with integer coefficients, non-zero discriminant and degree $d$ with $d\geq 3$. Let $\ep > 0$. There exists a positive number $C_F$ such that
\begin{equation} \label{rep by BF ME} R_F(Z) = C_F Z^{\frac{2}{d}} + O_{F,\ep} \left(Z^{\beta_F + \ep}\right),\end{equation}
where $\beta_F$ is given by (\ref{Beta D}) and (\ref{Beta DD}).
\end{theorem}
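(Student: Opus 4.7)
I would begin by converting the counting problem for $R_F(Z)$ into a count of representations. Set $r_F(h) = \#\{(x,y) \in \bZ^2 : F(x,y) = h\}$. A standard lattice-point count in the homogeneously scaled region $\{(x,y) \in \bR^2 : |F(x,y)| \leq Z\}$ gives
\[
\sum_{0 < |h| \leq Z} r_F(h) \;=\; A_F \, Z^{2/d} + O(Z^{1/d}).
\]
Let $G_F \subset \GL_2(\bZ)$ denote the finite group of integer automorphisms of $F$ and put $w_F = |G_F|$; since $G_F$ acts on the representations of any fixed $h$, one writes $r_F(h) = w_F \cdot \widetilde r_F(h)$ with $\widetilde r_F(h)$ the number of $G_F$-orbits. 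Then
\[
R_F(Z) \;=\; \sum_{0<|h|\leq Z} \widetilde r_F(h) \;-\; \sum_{0<|h|\leq Z} \bigl(\widetilde r_F(h) - 1\bigr) \mathbf{1}_{\widetilde r_F(h) \geq 2}.
\]
The first sum evaluates to $(A_F / w_F) Z^{2/d} + O(Z^{1/d})$, and a further combinatorial correction---analogous to Hooley's factor $1 - 2/(3m)$ in (\ref{Hooley result 2})---absorbs the effect of $\GL_2(\bQ)$-symmetries of $F$ that are not represented in $G_F$, together with local (congruence) refinements; altogether this produces the main term $C_F Z^{2/d}$.

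The principal obstacle is to bound the exceptional sum by $O_\ep(Z^{\beta_F + \ep})$---equivalently, to bound the number of $|h| \leq Z$ admitting two or more $G_F$-inequivalent representations. Such pairs of representations give integer points on the auxiliary affine variety $V_F : F(x_1, y_1) = F(x_2, y_2)$ lying off its diagonal and other trivial components. I would apply the \emph{global determinant method} of Heath-Brown and Salberger to $V_F$: the relevant points are covered by a bounded number of auxiliary hypersurfaces of controlled degree, and a B\'ezout-type argument then yields a power-saving count. Optimizing the auxiliary parameters produces the specific exponents in (\ref{Beta D}) and (\ref{Beta DD}): the intermediate exponent $3/((d-2)\sqrt d + 3)$ for $4 \leq d \leq 8$ reflects Salberger's $\sqrt d$-type saving; the bound $1/(d-1)$ for large $d$ comes from Evertse-type uniform bounds on Thue equations combined with a large-sieve input; the cubic exponent $12/19$ for irreducible $F$ arises from a dedicated cubic determinant argument; and the improvement to $1/2$ in the reducible cubic case exploits the factorization $F = L \cdot Q$ to reduce the problem to a fibration whose fibers carry binary quadratic forms. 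The dichotomy between (\ref{Beta D}) and (\ref{Beta DD}) reflects whether $\{|F|\leq Z\}$ is compact (no real linear factor of $F$, forcing $d$ even) or unbounded, which controls the shape of the boxes in the determinant argument.

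The hardest single step is the uniform quantitative application of the determinant method in the intermediate-degree regime, where the exponent must be tracked with precision throughout the optimization in order to realize $\beta_F$ exactly; in particular one must combine auxiliary polynomials at many primes with an archimedean input tailored to the real geometry of $F$. A secondary but delicate issue is the explicit identification of $C_F$: one must ensure that all $\GL_2(\bQ)$-symmetries of $F$ and the local divisibility phenomena encoded in formulas such as (\ref{Hooley result 2}) and (\ref{Hooley result 4}) are correctly accumulated into the main term rather than left in the error.
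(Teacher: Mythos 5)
Your overall strategy is the paper's: the main term comes from a Mahler-type area count of $\{0<|F(x,y)|\leq Z\}$ corrected for automorphisms, and the error term comes from bounding the integers with two or more inequivalent representations by counting integral points on the surface $F(x_1,x_2)=F(x_3,x_4)$ in $\bP^3$ off its lines, via the Heath-Brown--Salberger determinant method combined with a separate treatment of large solutions of the Thue inequality. That is exactly the decomposition into $\N_F^{(1)}$ and $\N_F^{(2)}$ used in the paper, and your identity $R_F(Z)=\sum\widetilde r_F(h)-\sum(\widetilde r_F(h)-1)\mathbf{1}_{\widetilde r_F(h)\geq 2}$ is the right bookkeeping.

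There are, however, two concrete places where your sketch would not deliver the theorem as written. First, the main term: you set $G_F\subset\GL_2(\bZ)$ and write $r_F(h)=w_F\widetilde r_F(h)$, deferring the rational symmetries to an unspecified ``combinatorial correction.'' In the paper this correction \emph{is} the bulk of the work on the constant: an element $A\in\Aut F\subset\GL_2(\bQ)$ with non-integral entries permutes integer representations only on the sublattice $\Lambda(A)=\{(u,v)\in\bZ^2: A\binom{u}{v}\in\bZ^2\}$, whose index is computed in Lemma \ref{gen det char}; one then needs an inclusion--exclusion over the lattices $\Lambda_i$ attached to the subgroups of $\Aut F$, and the key structural fact $\Lambda_i\cap\Lambda_j=\Lambda$ for $i\neq j$ (Lemma \ref{redundancy lemma}) to make that inclusion--exclusion close up into the formulas for $W_F$. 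Without this, $C_F$ is not identified and the points with ``extra'' rational-automorphism representations (a positive proportion $\asymp Z^{2/d}/m$ of the count) are neither main term nor admissible error. Second, two of your exponent attributions are off: the bound $1/(d-1)$ for $d\geq 9$ does not come from Evertse-type Thue bounds plus a large sieve, but from balancing the $B^{1+\ep}$ contribution of low-degree rational curves on the surface (Salberger) against the count of large solutions of $0<|F(x,y)|\leq Z$, which is handled by Heath-Brown's argument using Roth's theorem (Lemma \ref{HB lemma}); and the reducible cubic case is not a fibration into binary quadratics but an application of Heath-Brown's improved bound for cubic surfaces containing three rational coplanar lines, together with the observation (Lemma \ref{red cubics}) that reducibility of $F$ forces the surface to contain such a configuration. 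Your explanation of the dichotomy between (\ref{Beta D}) and (\ref{Beta DD}) --- definiteness of $F$ kills the large-solution range --- is correct.
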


Our proof of Theorem \ref{Rep by BF MT} depends on some results of Salberger in \cite{S2} and \cite{S3}, which are based on a refinement of Heath-Brown's $p$-adic determinant method in \cite{HB1}, an argument of Heath-Brown \cite{HB1} and a classical result of Mahler \cite{Mah}.\\

Let $A$ be an element of $\GL_2(\bQ)$ with
\[A = \begin{pmatrix} a_1 & a_2 \\ a_3 & a_4 \end{pmatrix}.\]
Put $F_A(x,y) = F(a_1 x + a_2 y, a_3 x + a_4 y)$. We say that $A$ fixes $F$ if $F_A = F$. The set of $A$ in $\GL_2(\bQ)$ which fix $F$ is the automorphism group of $F$ and we shall denote it by $\Aut F$. Let $G_1$ and $G_2$ be subgroups of $\GL_2(\bQ)$. We say that they are equivalent under conjugation if there is an element $T$ in $\GL_2(\bQ)$ such that $G_1 = T G_2 T^{-1}.$ \\

The positive number $C_F$ in (\ref{rep by BF ME}) is a rational multiple of $A_F$ and the rational multiple depends on $\Aut F$. There are $10$ equivalence classes of finite subgroups of $\GL_2(\bQ)$ under $\GL_2(\bQ)$-conjugation to which $\Aut F$ might belong and we give a representative of each equivalence class together with its generators in Table 1. \\
\begin{center} \label{Table 1}
\begin{tabular}
{ |p{1.5cm}|p{4.5cm}|p{1.5cm}|p{4.5cm}|  }
 \hline
 \multicolumn{4}{|c|}{Table 1} \\
 \hline
 Group & Generators & Group & Generators \\
 \hline
 & & & \\
 $\CC_1$   & $\begin{pmatrix} 1 & 0 \\ 0 & 1 \end{pmatrix}$    & $\DD_1$ &   $\begin{pmatrix} 0 & 1 \\ 1 & 0 \end{pmatrix}$ \\ & & & \\
 $\CC_2$ &   $\begin{pmatrix} -1 & 0 \\ 0 & -1 \end{pmatrix}$  & $\DD_2$ & $\begin{pmatrix} 0 & 1 \\ 1 & 0 \end{pmatrix}, \begin{pmatrix} -1 & 0 \\ 0 & -1 \end{pmatrix}$ \\ & & & \\
 $\CC_3$ & $\begin{pmatrix} 0 & 1 \\ -1 & -1 \end{pmatrix}$ & $\DD_3$ & $\begin{pmatrix} 0 & 1 \\ 1 & 0 \end{pmatrix}, \begin{pmatrix} 0 & 1 \\ -1 & -1 \end{pmatrix}$\\ & & & \\ 
 $\CC_4$    & $\begin{pmatrix} 0 & 1 \\ -1 & 0 \end{pmatrix}$ & $\DD_4$ &  $\begin{pmatrix} 0 & 1 \\ 1 & 0 \end{pmatrix}, \begin{pmatrix} 0 & 1 \\ -1 & 0 \end{pmatrix}$ \\ & & & \\
 $\CC_6$ &  $\begin{pmatrix} 0 & - 1 \\ 1 & 1 \end{pmatrix}$  & $\DD_6$ & $\begin{pmatrix} 0 & 1 \\ 1 & 0 \end{pmatrix}, \begin{pmatrix} 0 & 1 \\ -1 & 1 \end{pmatrix}$ \\ & & & \\
  \hline
\end{tabular}
\end{center}

Since the matrix $-I = \begin{pmatrix} -1 & 0 \\ 0 & -1 \end{pmatrix}$ is in $\Aut F$ if and only if the degree of $F$ is even, we see from an examination of Table 1 that if the degree of $F$ is odd then $\Aut F$ is equivalent to one of $\CC_1, \CC_3, \DD_1$ and $\DD_3$ and if the degree of $F$ is even then $\Aut F$ is equivalent to one of $\CC_2, \CC_4, \CC_6, \DD_2, \DD_4$ and $\DD_6$. \\

Note that the table has fewer entries than Table 1 of \cite{ST1} which gives representatives for the equivalence classes of finite subgroups of $\GL_2(\bZ)$ under $\GL_2(\bZ)$-conjugation. This is because for $i = 1,2,3$ the groups $\DD_i$ and $\DD_i^\ast$ are equivalent under conjugation in $\GL_2(\bQ)$ but are not equivalent under conjugation in $\GL_2(\bZ)$. Further every finite subgroup of $\GL_2(\bQ)$ is conjugate to a finite subgroup of $\GL_2(\bZ)$, see \cite{New}. \\

Let $\Lambda$ be the sublattice of $\bZ^2$ consisting of $(u,v)$ in $\bZ^2$ for which $\displaystyle A \binom{u}{v}$ is in $\bZ^2$ for all $A$ in $\Aut F$, and put
\begin{equation} \label{lattice det} m = d (\Lambda), \end{equation}
where $d(\Lambda)$ denotes the determinant of $\Lambda$.
Note that $m = 1$ when $\Aut F$ is equal to either $\CC_1$ or $\CC_2$. Observe that since $\CC_1$ and $\CC_2$ contain only diagonal matrices, their conjugacy classes over $\GL_2(\bQ)$ consist only of themselves.   \\

When $\Aut F$ is conjugate to $\DD_3$ it has three subgroups $G_1, G_2$ and $G_3$ of order $2$ with generators $A_1, A_2$ and $ A_3$ respectively, and one, $G_4$ say, of order $3$ with generator $A_4$. Let $\Lambda_i = \Lambda(A_i)$ be the sublattice of $\bZ^2$ consisting of $(u,v)$ in $\bZ^2$ for which $A_i \displaystyle \binom{u}{v}$ is in $\bZ^2$ and put
\begin{equation} \label{D3 lattice det} m_i = d(\Lambda_i)\end{equation}
for $i=1,2,3,4$. We remark that $m_4$ is well defined since, by (\ref{C3 C6}), $\Lambda_4$ does not depend on the choice of generator $A_4$. \\

When $\Aut F$ is conjugate to $\DD_4$ there are three subgroups $G_1, G_2$ and $G_3$ of order $2$ of $\Aut F / \{\pm I\}$. Let $\Lambda_i$ be the sublattice of $\bZ^2$ consisting of $(u,v)$ in $\bZ^2$ for which $A \displaystyle \binom{u}{v}$ is in $\bZ^2$ for $A$ in a generator of $G_i$ and put
\begin{equation} \label{D4 lattice det} m_i = d(\Lambda_i)\end{equation}
for $i = 1,2,3$. \\

Finally when $\Aut F$ is conjugate to $\DD_6$ there are three subgroups $G_1, G_2$ and $G_3$ of order $2$ and one, $G_4$ say, of order $3$ in $\Aut F / \{\pm I\}$. Let $A_i$ be in a generator of $G_i$ for $i = 1,2,3,4$. Let $\Lambda_i = \Lambda(A_i)$ be the sublattice of $\bZ^2$ consisting of $(u,v)$ in $\bZ^2$ for which $\displaystyle A_i \binom{u}{v}$ is in $\bZ^2$ and put
\begin{equation} \label{D6 lattice det} m_i = d(\Lambda_i) \end{equation}
for $i = 1,2,3,4$. 

\begin{theorem} \label{WF value theorem} The positive number $C_F$ in the statement of Theorem \ref{Rep by BF MT} is equal to $W_F A_F$ where $A_F$ is given by (\ref{AF defn}) and $W_F$ is given by the following table: 

\begin{center} \label{Table 2}
\begin{tabular}
{ |p{1.5cm}|p{3.3cm}|p{1.5cm}|p{7.7cm}|  }
 
 \hline
$\Rep (F)$  & $W_F$ & $\Rep (F)$ & $W_F$ \\
 \hline
 $\CC_1$   & $1$    & $\DD_1$ &   $1 - \dfrac{1}{2m}$ \\ & & & \\
 $\CC_2$ &   $\dfrac{1}{2}$  & $\DD_2$ & $\dfrac{1}{2} \left(1 - \dfrac{1}{2m}\right)$ \\ & & & \\
 $\CC_3$ & $1 - \dfrac{2}{3m}$ & $\DD_3$ & $1 - \dfrac{1}{2m_1} - \dfrac{1}{2m_2} - \dfrac{1}{2m_3} - \dfrac{2}{3m_4} + \dfrac{4}{3m} $\\ & & & \\
 $\CC_4$    & $\dfrac{1}{2} \left(1 - \dfrac{1}{2m}\right)$ & $\DD_4$ &  $\dfrac{1}{2} \left(1 - \dfrac{1}{2m_1} - \dfrac{1}{2m_2} - \dfrac{1}{2m_3} + \dfrac{3}{4m}\right)$ \\ & & & \\
 $\CC_6$ &  $\dfrac{1}{2} \left(1 - \dfrac{2}{3m}\right)$  & $\DD_6$ & $\dfrac{1}{2} \left(1 - \dfrac{1}{2m_1} - \dfrac{1}{2m_2} - \dfrac{1}{2m_3} - \dfrac{2}{3m_4} + \dfrac{4}{3m}\right)$ \\ & & & \\
  \hline 
\end{tabular}
\end{center}
Here $\Rep(F)$ denotes a representative of the equivalence class of $\Aut F$ under $\GL_2(\bQ)$ conjugation and $m, m_1, m_2, m_3, m_4$ are defined in (\ref{lattice det}),(\ref{D3 lattice det}), (\ref{D4 lattice det}), and (\ref{D6 lattice det}). \end{theorem}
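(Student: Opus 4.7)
The plan is to derive the value of $C_F$ by counting $\Aut F$-orbits of integer points in the region $\{|F|\le Z\}$ and relating that count to the baseline lattice-point total
\[
N_F(Z):=\#\{(x,y)\in\bZ^2:0<|F(x,y)|\le Z\}=A_F Z^{2/d}+O_F(Z^{1/d}),
\]
via Möbius inversion on the subgroup lattice of $\Aut F$. The fundamental observation is that $F$ is constant on $\Aut F$-orbits in $\bQ^2$; combined with the Evertse-type bound that the number of integers $h$ with $|h|\le Z$ admitting two $\Aut F$-inequivalent integer representations is $O_{F,\ep}(Z^{\beta_F+\ep})$ (a consequence of Thue--Mahler-type results and of the same Heath-Brown/Salberger determinant method underlying Theorem~\ref{Rep by BF MT}), this says that $R_F(Z)$ equals, up to the admissible error, the number of $\Aut F$-orbits in $\bQ^2$ meeting $\bZ^2\cap\{|F|\le Z\}$.

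To count those orbits, I introduce the orbit-invariant $k(v):=\#\{g\in\Aut F:gv\in\bZ^2\}$ on $\bZ^2\setminus\{0\}$; then $\#(\Aut F\cdot v\cap\bZ^2)=k(v)$ for every $v$ with trivial $\Aut F$-stabilizer in $\bQ^2$, the remaining $v$ lying on finitely many lines through the origin and contributing $O(Z^{1/d})$. Hence
\[
R_F(Z)=\sum_{v\in\bZ^2,\ 0<|F(v)|\le Z}\frac{1}{k(v)}+O_{F,\ep}(Z^{\beta_F+\ep}).
\]
For each subgroup $H\le\Aut F$ set $\Lambda_H:=\{v\in\bZ^2:hv\in\bZ^2\ \forall h\in H\}$, whose determinant is one of the integers $m,m_1,m_2,m_3,m_4$ of Table~2; a standard lattice-point argument yields $\#(\Lambda_H\cap\{|F|\le Z\})=A_F Z^{2/d}/d(\Lambda_H)+O(Z^{1/d})$. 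Partitioning the sum by the subset $\{g:gv\in\bZ^2\}$ and applying Möbius inversion on the subgroup lattice of $\Aut F$ (respectively of $\Aut F/\{\pm I\}$ in even degree) expresses $R_F(Z)$ as a $\bQ$-linear combination of the quantities $A_F Z^{2/d}/d(\Lambda_H)$, collapsing to $W_F A_F Z^{2/d}+O(\cdot)$ with $W_F$ as in Table~2.

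The computation splits by the ten equivalence classes of Table~1. The seven classes $\CC_1,\CC_2,\CC_3,\CC_4,\CC_6,\DD_1,\DD_2$ each involve at most one non-trivial sublattice $\Lambda$ of determinant $m$; elementary enumeration of $\{g\in\Aut F:gv\in\bZ^2\}$ produces the single-term entries of Table~2, with an overall factor of $\tfrac12$ whenever $-I\in\Aut F$ (i.e.\ in even degree). The dihedral classes $\DD_3,\DD_4,\DD_6$ require the full multi-subgroup inclusion-exclusion over the three order-two subgroups $G_1,G_2,G_3$ (and, for $\DD_3$ and $\DD_6$, the order-three subgroup $G_4$) defined via \eqref{D3 lattice det}, \eqref{D4 lattice det}, and \eqref{D6 lattice det}; the four- and five-term formulas of Table~2 emerge as the Möbius sum over this subgroup lattice.

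The principal obstacle is the lattice-theoretic bookkeeping in the dihedral cases: one must verify, for each representative in Table~1 and then conjugation-invariantly, the identities $\Lambda_{G_i}\cap\Lambda_{G_j}=\Lambda_{\langle G_i,G_j\rangle}$ for distinct $i,j$, that the common intersection of the relevant sublattices equals $\Lambda$ of determinant $m$, and the fact referenced at \eqref{D3 lattice det} that $\Lambda(A_4)=\Lambda(A_4^2)$ (so that $m_4$ is well-defined regardless of which generator is chosen). Granted these compatibilities, the Möbius coefficients collapse precisely to the rational numbers $-\tfrac{1}{2m_i}$, $-\tfrac{2}{3m_4}$, $+\tfrac{4}{3m}$, and $+\tfrac{3}{4m}$ appearing in Table~2. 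A secondary technical point is to confirm that both the $O(Z^{1/d})$ contribution from fixed axes of non-identity elements of $\Aut F$ and the $O_{F,\ep}(Z^{\beta_F+\ep})$ contribution from exceptional Thue values remain absorbed by the error term of Theorem~\ref{Rep by BF MT}.
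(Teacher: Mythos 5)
Your proposal is correct and follows essentially the same route as the paper: bound the non-essentially-represented integers by $O_{F,\ep}(Z^{\beta_F+\ep})$, count the remaining integers as $\Aut F$-orbits by weighting each integer point $v$ by the reciprocal of the number of automorphisms preserving its integrality, and evaluate the resulting sum by inclusion--exclusion over the sublattices $\Lambda_H$ using Mahler's theorem and the key identities $\Lambda_i\cap\Lambda_j=\Lambda$ (the paper's Lemma \ref{redundancy lemma}). Your Möbius-inversion phrasing is just a repackaging of the paper's case-by-case inclusion--exclusion, and the compatibilities you flag as the principal obstacle are exactly what the paper's Section \ref{auto group and lattices} establishes.
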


We remark, see Lemma \ref{redundancy lemma}, that if $\Aut F$ is equivalent to $\DD_4$ then $m = \lcm(m_1, m_2, m_3)$, the least common multiple of $m_1, m_2$ and $m_3$, and if $\Aut F$ is equivalent to $\DD_3$ or $\DD_6$ then $m = \lcm(m_1, m_2, m_3, m_4)$. \\

Observe that if $F$ is a binary form with $F(1,0) \ne 0$ and $A = \begin{pmatrix} a_1 & a_2 \\ a_3 & a_4 \end{pmatrix}$ is in $\Aut F$ then $A$ acts on the roots of $F$ by sending a root $\alpha$ to $\dfrac{a_1 \alpha + a_2}{a_3 \alpha + a_4}$. If $A$ fixes a root $\alpha$ then
\[ a_3 \alpha^2 + (a_4 - a_1)\alpha + a_2 = 0.\]
If $F$ is an irreducible cubic then $\alpha$ has degree $3$ and so 
\[a_3 = a_4 - a_1 = a_2 = 0,\]
hence
\[A = \begin{pmatrix} 1 & 0 \\ 0 & 1 \end{pmatrix} \text{ or } A = \begin{pmatrix} -1 & 0 \\ 0 & -1 \end{pmatrix}.\]
But since $F$ has degree $3$ we see that $A = \begin{pmatrix} 1 & 0 \\ 0 & 1 \end{pmatrix}$. Therefore the only element of $\Aut F$ which fixes a root of $F$ is the identity matrix $I$. \\

If $A$ in $\Aut F$ does not fix a root it must permute the roots cyclically and thus must have order $3$. Further, since any element in $\Aut F$ of order $2$ would fix a root of $F$, we find that $\Aut F$ is $\GL_2(\bQ)$-conjugate to $\CC_3$, say $\Aut F = T \CC_3 T^{-1}$ with $T$ in $\GL_2(\bQ)$. Forms invariant under $\CC_3$ are of the form
\[G(x,y) = ax^3 + bx^2 y + (b - 3a)xy^2 - ay^3\]
with $a$ and $b$ integers; see (74) of \cite{ST1}. Notice that
\[\Delta(G) = (b^2 - 3ab + 9a^2)^2.\]
Then $F = G_T$ for some $G$ invariant under $\CC_3$ and so
\[\Delta(F) = (\det T)^6 \Delta(G).\]
We conclude that if $F$ is an irreducible cubic form with discriminant not a square then $\Aut F$ is $\CC_1$ and so $W_F = 1$; thus Hooley's result (\ref{Hooley result 1}) follows from Theorems \ref{Rep by BF MT} and \ref{WF value theorem}. When $\Aut F$ is equivalent to $\CC_3$ then $W_F = 1 - \dfrac{2}{3m}$ where $m$ is the determinant of the lattice consisting of $(u,v)$ in $\bZ^2$ for which $\displaystyle A \binom{u}{v}$ is in $\bZ^2$ for all $A$ in $\Aut F$. By Lemma \ref{redundancy lemma} it suffices to consider the lattice consisting of $(u,v)$ in $\bZ^2$ for which $\displaystyle A \binom{u}{v}$ is in $\bZ^2$ for a generator $A$ of $\Aut F$. Hooley has shown in \cite{Hoo2} that the determinant of the lattice is $m$ and so (\ref{Hooley result 2}) follows from Theorems \ref{Rep by BF MT} and \ref{WF value theorem}. \\

Now if $F(x,y) = ax^4 + bx^2 y^2 + cy^4$ and the discriminant of $F$ is non-zero then $\Aut F$ is equivalent to $\DD_2$ unless $a/c = A^4 / C^4$ with $A$ and $C$ coprime positive integers. In this case $\Aut F$ is equivalent to $\DD_4$. In the first instance $m = 1$ and $W_F = \dfrac{1}{4}$ and so we recover Hooley's estimate (\ref{Hooley result 3}). In the second case $m_1 = 1$ and $m_2 = m_3 = m = AC$ and so 
\[W_F = \frac{1}{4} \left(1 - \frac{1}{2AC}\right).\]
which gives (\ref{Hooley result 4}). \\

If follows from the analysis on page 818 of \cite{ST1} that when $F$ is a binary cubic form with non-zero discriminant $\Aut F$ is equivalent to $\CC_1, \CC_3, \DD_1$ or $\DD_3$ whereas if $F$ is a binary quartic form with non-zero discriminant $\Aut F$ is equivalent to $\CC_2, \CC_4, \DD_2$ or $\DD_4$. In \cite{X} and \cite{XThesis} the second author gives a set of generators for $\Aut F$ in these cases and as a consequence it is possible to determine $W_F$ explicitly in terms of the coefficients of $F$. \\

In the special case that $F$ is a binomial form, so $F(x,y) = ax^d + by^d$, it is straightforward to determine $\Aut F$; see Lemma \ref{binomial lemma}. Then, by Theorems \ref{Rep by BF MT} and \ref{WF value theorem}, we have the following result.

\begin{corollary} \label{binomial} Let $a, b$ and $d$ be non-zero integers with $d \geq 3$ and let 
\[F(x,y) = ax^d + by^d.\]
Then (\ref{rep by BF ME}) holds with $C_F = W_F A_F$.  If $d$ is even and $ab>0$ then $\beta_F$ is given by (\ref{Beta DD}). If $a/b$ is not the $d$-th power of a rational number then
\[W_F = \begin{cases} 1 & \text{if } d \text{ is odd,} \\ \\
\dfrac{1}{4} & \text{if } d \text{ is even.} \end{cases}\]

If $\dfrac{a}{b} = \left(\dfrac{A}{B}\right)^d$ with $A$ and $B$ coprime integers then
\[W_F = \begin{cases} 1 - \dfrac{1}{2|AB|} & \text{if } d \text{ is odd,} \\
\dfrac{1}{4} \left(1 - \dfrac{1}{2 |AB|} \right) & \text{if } d \text{ is even.} \end{cases}\]
Further if $d$ is odd then
\[A_F = \frac{1}{d |ab|^{1/d}} \left(\dfrac{2 \Gamma(1 - 2/d) \Gamma(1/d)}{\Gamma(1 - 1/d)} + \frac{\Gamma^2(1/d)}{\Gamma(2/d)} \right)\]
while if $d$ is even
\[A_F = \frac{2}{d|ab|^{1/d}} \frac{\Gamma^2(1/d)}{\Gamma(2/d)} \quad \text{ if } ab > 0\]
and
\[A_F = \frac{4}{d|ab|^{1/d}} \frac{\Gamma(1/d) \Gamma(1 - 2/d)}{\Gamma(1 - 1/d)} \quad \text{ if } ab < 0.\]
\end{corollary}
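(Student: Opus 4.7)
The plan is to specialize Theorems~\ref{Rep by BF MT} and~\ref{WF value theorem} to $F(x,y)=ax^d+by^d$, and then evaluate $A_F$ in closed form. The argument splits into three steps: identify $\Aut F$, read $W_F$ off Table~2, and compute the area integral defining $A_F$.

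First I would invoke Lemma~\ref{binomial lemma} to pin down $\Aut F$ up to $\GL_2(\bQ)$-conjugacy. Write $\sigma := \begin{pmatrix} 0 & B/A \\ A/B & 0 \end{pmatrix}$ (the ``swap'') and $\tau := \begin{pmatrix} 0 & B/A \\ -A/B & 0 \end{pmatrix}$ (satisfying $\tau^2 = -I$). A direct substitution shows: for $d$ odd, no nontrivial diagonal element preserves $F$, and $\sigma \in \Aut F$ iff $a/b = (A/B)^d$ with $\gcd(A,B) = 1$; for $d$ even, the four diagonal sign-changes $\operatorname{diag}(\pm 1, \pm 1)$ always preserve $F$, and $\tau$ additionally lies in $\Aut F$ iff $a/b = (A/B)^d$. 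Sorting by conjugacy gives the four possibilities $\Aut F \cong \CC_1, \DD_1, \DD_2, \DD_4$ asserted in the corollary. This also determines $\beta_F$ via Theorem~\ref{Rep by BF MT}: for $d$ even with $ab>0$ the form $u^d+v^d$ has no real linear factor so (\ref{Beta DD}) applies, while in every other case a real linear factor is present and (\ref{Beta D}) applies.

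Next I would compute the lattice determinants required by Table~2. A direct calculation shows that $\sigma(u,v)^{T} \in \bZ^2$ iff $B \mid u$ and $A \mid v$ (using $\gcd(A,B) = 1$), giving a sublattice of determinant $|AB|$; the same holds for $\tau$. The flip $\operatorname{diag}(-1,1)$ preserves $\bZ^2$. In the $\DD_1$ case this yields $m=|AB|$ and hence $W_F = 1-\tfrac{1}{2|AB|}$; in the generic $\DD_2$ case $m=1$ and $W_F=\tfrac14$. For $\DD_4$ the three order-$2$ subgroups of $\Aut F/\{\pm I\}$ are represented respectively by $\sigma,\tau,\operatorname{diag}(-1,1)$, so $m_1=m_2=|AB|$, $m_3=1$, and $m = \lcm(m_1,m_2,m_3) = |AB|$ by Lemma~\ref{redundancy lemma}; substitution collapses the $\DD_4$ entry of Table~2 to $W_F = \tfrac14\bigl(1-\tfrac{1}{2|AB|}\bigr)$.

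Finally for $A_F$, I would change variables $u = |a|^{1/d}x$, $v = |b|^{1/d}y$, which reduces the problem to $|ab|^{-1/d}$ times the area of $\{|\sgn(a) u^d + \sgn(b) v^d| \le 1\}$. The first-quadrant area of $\{u^d+v^d\le 1\}$ is the Dirichlet integral $\tfrac{1}{2d}\,\Gamma(1/d)^2/\Gamma(2/d)$, computed via $s=u^d,t=v^d$, and four-fold symmetry gives the even, $ab>0$ formula at once. For the mixed-sign region $\{|u^d-v^d|\le 1\}$ in the first quadrant (which handles the even $ab<0$ case by four-fold symmetry and the odd case through its two mixed-sign quadrants), the substitution $s=u^d,t=v^d$ followed by $w=t-s$ on $\{t\ge s\}$ factors the integral as $\int_0^1 w^{2/d-1}\,dw$ times the Beta integral $\int_0^\infty r^{1/d-1}(1+r)^{1/d-1}\,dr = \Gamma(1/d)\Gamma(1-2/d)/\Gamma(1-1/d)$; summing two same-sign quadrants and two mixed-sign quadrants in the odd case then produces the odd-$d$ formula. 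The one slightly delicate point is the convergence of this Beta integral at infinity, which is exactly where the hypothesis $d\ge 3$ is used.
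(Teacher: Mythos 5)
Your proposal is correct and follows essentially the same route as the paper: identify $\Aut F$ via Lemma \ref{binomial lemma}, compute the lattice determinants $m, m_i$ (getting $|AB|$ for the swap and $1$ for the diagonal flips) to read $W_F$ off Table 2, and evaluate $A_F$ by reducing to Beta-function integrals. The only differences are cosmetic — your labelling of $m_1,m_2,m_3$ in the $\DD_4$ case is a permutation of the paper's (harmless, since the formula is symmetric), and you normalize the coefficients before applying Dirichlet-type substitutions whereas the paper substitutes $ax^{d}=u$, $by^{d}=uv$ directly, but both land on the same Beta integrals.
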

$$
$$

Finally we mention that there are other families of forms where one may readily determine $W_F$. For instance let  $a, b$ and $k$ be integers with $a \ne 0$, $2a \ne \pm b$ and $k \geq 2$ and put 
\begin{equation} \label{Thue} F(x,y)=ax^{2k}+bx^ky^k+ay^{2k} .\end{equation}
The discriminant of $F$ is non-zero since $a\ne0$ and $2a \ne \pm b$. Further, $\DD_4$ is plainly contained in $\Aut F$ and there is no larger group which is an automorphism group of a binary form which contains  $\DD_4$. Therefore  $\DD_4$ is  $\Aut F$. It now follows from Theorem \ref{WF value theorem} that $W_F=1/8$ since  $m_1 = m_2 = m_3 = m = 1$.

%%%%%%%%%%%%%%%%%%%%%%%%%%%%%%%%%%%%%%%%%%%%%%%%%%%%%%%%%%%%%%%
\section{Preliminary lemmas}
\label{Rep BF prelim}
%%%%%%%%%%%%%%%%%%%%%%%%%%%%%%%%%%%%%%%%%%%%%%%%%%%%%%%%%%%%%%%

We shall require a result of Mahler \cite{Mah} from 1933. For a positive number $Z$ we put
\[\N_F(Z) = \{(x,y) \in \bZ^2 : 0 < |F(x,y)| \leq Z \}\]
and
\[N_F(Z) = |\N_F(Z)|.\]

\begin{lemma} \label{Mahler Thm} Let $F$ be a binary form with integer coefficients, non-zero discriminant and degree $d \geq 3$. Then, with $A_F$ defined by (\ref{AF defn}), we have
\[  N_F(Z)  = A_F Z^{\frac{2}{d}} + O_F \left(Z^{\frac{1}{d-1}}\right).\]
\end{lemma}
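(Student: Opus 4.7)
The idea is to compare the lattice point count $N_F(Z)$ with the Lebesgue area of $\D_Z := \{(x,y) \in \bR^2 : |F(x,y)| \leq Z\}$. By homogeneity of $F$, $\D_Z = Z^{1/d} \D_1$, hence $\mu(\D_Z) = A_F Z^{2/d}$ by \eqref{AF defn}, which is the desired main term. The task is thus to show that the discrepancy between $|\D_Z \cap \bZ^2|$ and $\mu(\D_Z)$ is $O(Z^{1/(d-1)})$, together with removing the $O(Z^{1/d})$ lattice points on the (finitely many) rational lines where $F$ vanishes, which are excluded from $\N_F(Z)$ by definition.

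Factor $F(x,y) = a_d \prod_{i=1}^d (y - \rho_i x)$ over $\bC$; since $\Delta(F) \ne 0$, the $\rho_i$ are distinct. The region $\D_Z$ is bounded except for narrow ``spikes'' extending to infinity along each line $y = \rho_i x$ with $\rho_i$ real. I would decompose $\D_Z = \K \sqcup \S_1 \sqcup \cdots \sqcup \S_r$ into a bounded ``core'' $\K$ (where all linear factors satisfy $|y - \rho_i x| \gg Z^{1/d}$), of diameter $\ll Z^{1/d}$, and disjoint spikes $\S_i$ around each real root $\rho_i$, with the interface chosen so that the transition between core and spike is piecewise smooth.

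On the core, a standard lattice-point estimate (e.g., column-by-column summation, or a direct application of van der Corput) gives $|\K \cap \bZ^2| = \mu(\K) + O(Z^{1/d})$, since the boundary $|F|=Z$ has bounded curvature at scale $Z^{1/d}$ and total length $\ll Z^{1/d}$; because $d \geq 3$ this error is within the target. On each spike, change variables to $u = y - \rho_i x$, $v = x$; then $|F| \asymp |u| \cdot |v|^{d-1}$ up to bounded multiplicative factors, so the spike has the shape $\{(u,v) : |u| \leq C_i Z/|v|^{d-1}\}$ truncated at $|v| \leq V_{\max} \asymp Z^{1/(d-1)}$, beyond which the spike is too narrow to contain any lattice point. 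Counting lattice points in the original coordinates column by column in $x = v$: for each integer $v$, the admissible $y$'s form an interval of length $2 C_i Z/|v|^{d-1} + O(Z/|v|^d)$, and counting integers in this interval costs $+O(1)$ per column. Summing, the main term matches $\mu(\D_Z \cap \S_i)$, while the cumulative error is $O(V_{\max}) + O(Z \cdot (Z^{1/d})^{-(d-1)}) = O(Z^{1/(d-1)}) + O(Z^{1/d})$ per spike.

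Adding over the core and the finitely many spikes yields $N_F(Z) = \mu(\D_Z) + O(Z^{1/(d-1)}) = A_F Z^{2/d} + O(Z^{1/(d-1)})$. The principal technical difficulty is the spike analysis: one must carefully patch the boundary between $\K$ and each $\S_i$ so that the main terms add to exactly $\mu(\D_Z)$ (no boundary strip is double-counted or omitted); handle irrational $\rho_i$, for which the substitution $(u,v) = (y - \rho_i x, x)$ distorts the lattice, by working throughout in the original $(x,y)$ coordinates; and, when $\rho_i$ is rational, subtract the $O(Z^{1/d})$ lattice points on the line $y = \rho_i x$ itself, since these lie in $\{F=0\}$ and are excluded from $\N_F(Z)$.
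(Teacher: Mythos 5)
Your overall strategy (core plus spikes, area as main term, column-by-column counting) is the classical one going back to Mahler, and most of the pieces are sound: the scaling $\mu(\D_Z)=A_FZ^{2/d}$, the $O(Z^{1/d})$ boundary discrepancy on the core, the $O(1)$-per-column error over $O(Z^{1/(d-1)})$ columns, and the $O(Z^{1/(d-1)})$ area of the spike tails. Note, for calibration, that the paper does not actually prove this lemma: it cites Mahler \cite{Mah} for the irreducible case and deduces the general case from Theorem 3 of \cite{Thun}, so you are reconstructing an argument the authors outsource.

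The genuine gap is your assertion that beyond $|v|\asymp Z^{1/(d-1)}$ ``the spike is too narrow to contain any lattice point.'' A strip of width less than $1$ about a line of irrational slope through the origin still contains infinitely many lattice points, and they really do occur here. Take $F(x,y)=y^3-2x^3$, so $\rho=2^{1/3}$ and $Z^{1/(d-1)}=Z^{1/2}$: for a continued-fraction convergent $p/q$ of $2^{1/3}$ one has $|p-2^{1/3}q|<1/q$, hence $|F(q,p)|\asymp q$, so every convergent with $q\leq cZ$ gives a point of $\N_F(Z)$ lying in the spike with $|x|=q$ as large as $\asymp Z\gg Z^{1/2}$. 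Your truncation argument is valid when $\rho_i$ is rational (there $|y-\rho_ix|\geq 1/q$ forces $|x|\ll_q Z^{1/(d-1)}$), but for irrational algebraic $\rho_i$ the far part of the spike is nonempty and you must instead \emph{bound the number} of lattice points there by $O(Z^{1/(d-1)})$. That bound is true but requires a Diophantine input --- Roth's theorem or the Thue--Siegel principle combined with a gap principle for good rational approximations --- and is precisely the content of Lemma \ref{HB lemma} of the paper (Heath-Brown's Theorem 8), which with $\gamma=1/(d-1)$ gives $O(Z^{1/d}\log Z+Z^{1/(d-1)})$ for the count of solutions with $\max\{|x|,|y|\}>Z^{1/(d-1)}$. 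Without some such ingredient your proof does not close; with it, your decomposition does yield the lemma.
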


In fact Mahler proved this result only under the assumption that $F$ is irreducible. However, Lemma \ref{Mahler Thm} can be deduced as a special case of Theorem 3 in \cite{Thun}. 

\begin{lemma} \label{HB lemma} Let $F$ be a binary form with integer coefficients, non-zero discriminant and degree $d \geq 3$. Let $Z$ be a positive real number and let $\gamma$ be a real number larger than $1/d$. The number of pairs of integers $(x,y)$ with
\begin{equation} \label{thue bound} 0 < |F(x,y)| \leq Z \end{equation}
for which
\[\max\{|x|, |y|\} > Z^\gamma\]
is 
\[O_F \left(Z^{\frac{1}{d}} \log Z + Z^{1 - (d-2) \gamma} \right).\]
\end{lemma}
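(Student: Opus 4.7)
The plan is to factor $F(x,y) = c_d\prod_{i=1}^d(y-\alpha_i x)$ over $\mathbb{C}$, with the $\alpha_i$ distinct because $\Delta(F)\neq 0$, and to argue via root separation that a pair $(x,y)$ counted by the lemma must have $y/x$ (or $x/y$) close to a real root of $F$. Counting then reduces to a dyadic sum in the "low" range plus a Diophantine-approximation estimate in the "high" range. If $c_d=0$, I first pull out a factor of $x$ from $F$, whose separate contribution is easily absorbed into $O(Z^{1/d})$.

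By the symmetry of the hypothesis under $(x,y)\leftrightarrow(y,x)$ I reduce to counting $(x,y)$ with $|x|>Z^\gamma$. Set $\rho=\tfrac12\min_{i\neq j}|\alpha_i-\alpha_j|>0$ and $C_1=1/(|c_d|\rho^{d-1})$. For $Z$ large and $|x|>Z^\gamma>Z^{1/d}$, the bound $|F(x,y)|\le Z<|c_d|\rho^d|x|^d$ forces $|y/x-\alpha_i|<\rho$ for a unique root $\alpha_i$; otherwise every factor $|y-\alpha_j x|\ge\rho|x|$. The remaining factors ($j\neq i$) each satisfy $|y-\alpha_j x|\ge\rho|x|$, yielding
\[|y-\alpha_i x|\le C_1 Z/|x|^{d-1}.\]
A non-real $\alpha_i$ would give $|y-\alpha_i x|\ge|\Im\alpha_i|\cdot|x|$, forcing $|x|^d\ll_F Z$ and contradicting $|x|>Z^{1/d}$. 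Hence only real roots contribute, and it suffices to count near each such $\alpha$.

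For a fixed real root $\alpha$, the number of integer $y$ with $|y-\alpha x|\le C_1Z/|x|^{d-1}$ is at most $2C_1 Z/|x|^{d-1}+1$. I split at $X_1:=(C_1 Z)^{1/(d-1)}$. In the \emph{low range} $Z^\gamma<|x|\le X_1$ the interval has length $\ge 1$, so the count per $x$ is $O(Z/|x|^{d-1})$; summing (using $d\ge 3$ so that $\sum n^{-(d-1)}$ converges geometrically from $Z^\gamma$) gives
\[\sum_{|x|>Z^\gamma}\frac{Z}{|x|^{d-1}}\ll Z\cdot(Z^\gamma)^{-(d-2)}=Z^{1-(d-2)\gamma}.\]
In the \emph{high range} $|x|>X_1$ each $x$ admits at most one valid $y$, and such a pair satisfies $|y/x-\alpha|<C_1Z/|x|^d$. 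Applying the standard gap principle to two such pairs $(x_1,y_1),(x_2,y_2)$ with $|x_1|\le|x_2|$ via $|y_1x_2-y_2x_1|\ge 1$, I obtain $|x_2|\gg|x_1|^{d-1}/Z$; once $|x_k|\gtrsim Z^{1/(d-2)}$ this forces $|x_{k+1}|\ge 2|x_k|$, so the super-threshold contribution is $O(\log Z)$. The intermediate band $X_1<|x|\le Z^{1/(d-2)}$ I treat by dyadic decomposition, bounding each slab by a lattice-point count in a thin strip of width $<1$ around $y=\alpha x$; this produces $O(Z^{1/d})$ per slab and $O(\log Z)$ slabs, giving an $O(Z^{1/d}\log Z)$ aggregate contribution. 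Summing over the finitely many real roots and the symmetric case $|y|>Z^\gamma$ completes the proof.

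The hard part will be the intermediate dyadic band in the high range, where a naive counting of integer $y$'s with $\|\alpha x\|<C_1Z/|x|^{d-1}$ per slab is too crude and yields only the weaker bound $O(Z^{1/(d-1)})$. To land at $O(Z^{1/d}\log Z)$ one must instead exploit that the admissible strip has width $<1$ together with a discrepancy or gap-principle input controlling the distribution of $\{\alpha n\}$ inside each dyadic block; this is the technical core of the argument.
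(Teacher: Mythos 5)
Your geometric setup (localize near the real root directions, then count lattice points in thin strips around each) is the right picture, and it is essentially how Heath-Brown's Theorem 8 of \cite{HB1} --- which the paper's proof follows and quotes --- is established. But the proposal has a genuine gap, and you have located it yourself: the intermediate band $X_1<|x|\le Z^{1/(d-2)}$ is not proved, and the repair you suggest (discrepancy of $\{\alpha x\}$ modulo one) is aimed at the wrong obstruction. The reason a per-slab count can exceed $Z^{1/d}$ is not poor equidistribution of $\alpha x$; it is that the admissible points cluster on lines through the origin. If $(x_0,y_0)$ is primitive with $|F(x_0,y_0)|=c$ small, then all $\le (Z/c)^{1/d}$ multiples $h(x_0,y_0)$ lie in your strip, and up to $C/|x_0|$ of them land in a single dyadic slab $C<|x|\le 2C$. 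The same defect undermines your high-range gap principle as stated: for collinear pairs one has $y_1x_2-y_2x_1=0$, so the inequality $|y_1x_2-y_2x_1|\ge 1$ is unavailable precisely for the pairs that ultimately produce the $Z^{1/d}$ in the answer.

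The missing idea --- which is the actual content of the paper's proof --- is to reduce first to $\gcd(x,y)=1$. For coprime points in the strip $|y-\alpha x|\le\delta$ with $C<|x|\le 2C$ and $\delta\ll Z/C^{d-1}$, any two points distinct up to sign satisfy $1\le|x_1y_2-x_2y_1|\ll C\delta$, and the standard primitive-point count gives $S(Z;C)\ll 1+Z/C^{d-2}$ per dyadic block, with no intermediate regime to worry about. Summing dyadically over $C\ge Z^{\gamma}$ --- using Roth's theorem to know the sum terminates at $C\ll Z^{2}$, an input you also need (but never invoke) for your doubling argument to give $O(\log Z)$ rather than $O(\log X_{\max})$ --- yields $\ll\log Z+Z^{1-(d-2)\gamma}$ for coprime pairs. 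The unrestricted count is then recovered by writing $(x,y)=h(x',y')$ and summing the coprime bound over $h\le Z^{1/d}$ (legitimate since $|F(x',y')|\ge 1$ forces $h^{d}\le Z$); the $h$-sum of the $\log Z$ terms is exactly what produces $Z^{1/d}\log Z$. Without this coprimality reduction, the intermediate band cannot be closed along the lines you propose.
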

\begin{proof} We shall follow Heath-Brown's proof of Theorem 8 in \cite{HB1}. Accordingly put
\[S(Z;C) = |\{(x,y) \in \bZ^2 : 0 < |F(x,y)| \leq Z, C < \max\{|x|, |y|\} \leq 2C, \gcd(x,y) = 1\}|\]
and suppose that $C \geq Z^\gamma$. Heath-Brown observes that by Roth's theorem $S(Z;C) = 0$ unless $C \ll Z^2$. Further, 
\begin{equation} \label{HB bound 1} S(Z;C) \ll 1 + \frac{Z}{C^{d-2}}.
\end{equation}
Put
\[S^{(1)}(Z;C) = |\{(x,y) \in \bZ^2 : 0 < |F(x,y)| \leq Z, C < \max\{|x|,|y|\}, \gcd(x,y) = 1\}|.\]
Therefore, on replacing $C$ by $2^j C$ in (\ref{HB bound 1}) for $j= 1,2,...$ and summing we find that
\[S^{(1)}(Z;C) \ll \log Z + \frac{Z}{C^{d-2}}.\]
Next put 
\[S^{(2)}(Z;C) = |\{(x,y) \in \bZ^2 : 0 < |F(x,y)| \leq Z, C < \max\{|x|, |y|\}|.\]
Then
\begin{align*} S^{(2)}(Z;C) & \ll \sum_{h \leq Z^{1/d}} S^{(1)} \left(\frac{Z}{h^d}, \frac{C}{h} \right) \\
& \ll \sum_{h \leq Z^{1/d}} \left(\log Z + \frac{Z}{h^2C^{d-2}} \right) \\
& \ll Z^{\frac{1}{d}} \log Z + \frac{Z}{C^{d-2}}
\end{align*}
and our result follows on taking $C = Z^\gamma$. \\

We note that instead of appealing to Roth's theorem it is possible to treat the large solutions of (\ref{thue bound}) by means of the Thue-Siegel principle; see \cite{BS} and \cite{ST1}. As a consequence all constants in the proof are then effective. \end{proof}

We say that an integer $h$ is \emph{essentially represented} by $F$ if whenever $(x_1, y_1), (x_2, y_2)$ are in $\bZ^2$ and
\[F(x_1, y_1) = F(x_2, y_2) = h\]
then there exists $A$ in $\Aut F$ such that
\[A \binom{x_1}{y_1} = \binom{x_2}{y_2}.\]
Observe that if there is only one integer pair $(x_1,y_1)$ for which $F(x_1,y_1)=h$ then $h$ is essentially represented since $I$ is in $\Aut F$. \\

Put
\[\N_F^{(1)}(Z) = \{(x,y) \in \bZ^2 : 0 < |F(x,y)| \leq Z \text{ and } F(x,y) \text{ is essentially represented by } F\}\]
and
\[\N_F^{(2)}(Z) = \{(x,y) \in \bZ^2 : 0 < |F(x,y)| \leq Z \text{ and } F(x,y) \text{ is not essentially represented by } F\}.\]
Let $N_F^{(i)}(Z) = |\N_F^{(i)}(Z)|$ for $i = 1,2$. \\

Let $X$ be a smooth surface in $\bP^3$ of degree $d$ defined over $\bQ$, and for a positive number $B$ let $N_1(X; B)$ denote the number of  integer points on $X$ with height at most $B$ which do not lie on any lines contained in $X$. Colliot-Th\'el\`ene proved in the appendix of \cite{HB1} that if $X$ is a smooth projective surface of degree $d \geq 3$ then there are at most $O_d(1)$  curves of degree at most $d-2$ contained in $X$. This, combined with Salberger's work in \cite{S3}, implies that for any $\ep > 0$, we have
\begin{equation} \label{Salberger 1} N_1(X; B) = O_\ep \left(B^{\frac{12}{7} + \ep} \right) \text{ if } d = 3.\end{equation}
Heath-Brown obtained a better estimate for $N_1(X;B)$ when $d = 3$ and the surface $X$ contains three lines which are rational and co-planar in \cite{HB3}. In particular, he proved that in this case we have
\begin{equation} \label{Heath-Brown 1} N_1(X;B) = O_{\ep} \left(B^{\frac{4}{9} + \ep}\right). \end{equation} 
Further, by the main theorem of the global determinant method for projective surfaces of Salberger \cite{S2}, which has been generalized to the case of weighted projective space in Theorem 3.1 of \cite{X0}, and controlling the contribution from conics contained in a projective surface $X$, as was done by Salberger in \cite{S1}, we obtain
\begin{equation} \label{Salberger 2} N_1(X;  B) = O_{d,\ep} \left(B^{\frac{3}{\sqrt{d}} + \ep}  + B^{1 + \ep} \right) \text{ if } d \geq 4.\end{equation} 
To make use of (\ref{Heath-Brown 1}), we shall require the following lemma, which is a consequence of a result on characterizing lines on surfaces $X$ of the shape
\[X: F(x_1, x_2) - F(x_3, x_4) = 0\]
for a binary form $F$ with $\deg F \in \{3,4\}$ in \cite{X}. 

\begin{lemma} \label{red cubics} Let $F$ be a binary cubic form with integer coefficients and non-zero discriminant. Let $X$ be the surface in $\bP^3$ given by the equation 
\[F(x_1, x_2) - F(x_3, x_4) = 0.\]
Then $X$ contains three rational, co-planar lines if $F$ is reducible over $\bQ$.  
\end{lemma}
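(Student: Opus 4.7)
The plan is to exhibit an explicit rational hyperplane $\Pi\subset\bP^3$ whose intersection with $X$ decomposes over $\bQ$ into three lines. Since $F$ is reducible over $\bQ$, I can write $F=LQ$ with $L(x,y)=ax+by$ a primitive rational linear form and $Q(x,y)=\alpha x^2+\beta xy+\gamma y^2$ a binary quadratic form with integer coefficients; the non-zero discriminant hypothesis means $L$ and $Q$ share no common root, equivalently $Q(-b,a)\neq 0$. I would take $\Pi$ to be the hyperplane defined over $\bQ$ by $L(x_1,x_2)=L(x_3,x_4)$.

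On $\Pi$, the cubic defining $X$ collapses to
\[
F(x_1,x_2)-F(x_3,x_4) = L(x_1,x_2)\bigl[Q(x_1,x_2)-Q(x_3,x_4)\bigr],
\]
so the key step is to further factor $Q(x_1,x_2)-Q(x_3,x_4)$ on $\Pi$ into two $\bQ$-linear forms. I would use the identity
\[
Q(x_1,x_2)-Q(x_3,x_4)=(x_1-x_3)\bigl[\alpha(x_1+x_3)+\beta x_4\bigr] + (x_2-x_4)\bigl[\beta x_1+\gamma(x_2+x_4)\bigr],
\]
together with the plane equation $a(x_1-x_3)+b(x_2-x_4)=0$, to parameterize $(x_1-x_3,x_2-x_4)=t(-b,a)$ where $t$ is a $\bQ$-linear function on $\Pi$. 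Substituting then expresses $Q(x_1,x_2)-Q(x_3,x_4)$ on $\Pi$ as $t\cdot M$ for an explicit $\bQ$-linear form $M$. Hence on $\Pi$ the defining equation of $X$ becomes a product $L(x_1,x_2)\cdot t\cdot M$ of three rational linear forms, and $X\cap\Pi$ is a union of three rational coplanar lines.

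The remaining task is to check these three lines are genuinely distinct, which is where the non-zero discriminant enters in a substantive way. Evaluating $M$ at the point $(-b,a,0,0)\in\{L(x_1,x_2)=0\}\cap\Pi$ returns exactly $Q(-b,a)$, so the lines $\{L=0\}$ and $\{M=0\}$ on $\Pi$ could only coincide if $L$ and $Q$ share a root; the diagonal $\{t=0\}$ agrees with $\{L=0\}$ only along a point, and equals $\{M=0\}$ only if the pair of conditions $a\beta=2b\alpha$, $2a\gamma=b\beta$ hold, which a short calculation again forces $Q(-b,a)=0$. In every case the discriminant hypothesis gives a contradiction. I expect this distinctness bookkeeping to be the only mildly delicate part; the main idea, namely cutting $X$ with the plane on which the rational linear factor of $F$ agrees on both halves, makes the desired factorization essentially automatic.
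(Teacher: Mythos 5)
Your proof is correct, and it takes a genuinely different route from the one in the paper. The paper never touches the factorization $F=LQ$ directly: it first shows the conclusion holds whenever $\Aut F$ contains a rational element of order $2$, reduces by $\GL_2(\bQ)$-conjugation to the case $\left(\begin{smallmatrix} 0 & 1 \\ 1 & 0 \end{smallmatrix}\right)\in\Aut F$, in which $F$ is symmetric and divisible by $x+y$, reads off the three lines $[s,t,s,t]$, $[s,t,t,s]$, $[s,-s,t,-t]$ in the plane $x_1+x_2-x_3-x_4=0$ from Lemma 5.2 of \cite{X}, and then closes the loop with Theorem 3.1 of \cite{X}, which asserts that a binary cubic with non-zero discriminant is reducible over $\bQ$ precisely when $\Aut F$ contains an element of order $2$. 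Your argument --- cutting $X$ with the rational plane $L(x_1,x_2)=L(x_3,x_4)$, on which the defining cubic visibly collapses to the product $L\cdot t\cdot M$ of three rational linear forms, and using $Q(-b,a)\neq 0$ (the non-vanishing of the discriminant) to keep the three lines distinct --- is self-contained and more elementary; I checked your identity for $Q(x_1,x_2)-Q(x_3,x_4)$, your evaluation $M(-b,a,0,0)=Q(-b,a)$, and the degenerate cases $a=0$ or $b=0$, and everything goes through. What the paper's route buys is coherence with the automorphism-group viewpoint running through the whole argument (two of its three lines are the graphs of the identity and of the order-$2$ automorphism, the same rational lines that carry the essentially represented values in Lemma \ref{EFZ lemma}), at the cost of leaning on two external results from \cite{X}; your route buys a short, verifiable computation. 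In the symmetric case your plane $\Pi$ specializes to $x_1+x_2=x_3+x_4$ and your three lines to the paper's three, so the two constructions agree where they overlap.
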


\begin{proof} We first show that $X$ contains three rational, co-planar lines if $F$ has a rational automorphism of order 2. Since all elements of order 2 in $\GL_2(\bQ)$ are $\GL_2(\bQ)$-conjugate to $\left(\begin{smallmatrix} 0 & 1 \\ 1 & 0 \end{smallmatrix}\right)$ and the property of $X$ having three rational, co-planar lines is preserved under $\GL_2(\bQ)$-transformations of $F$, we may assume that $T = \left(\begin{smallmatrix} 0 & 1 \\ 1 & 0 \end{smallmatrix}\right) \in \Aut F$. In particular, we assume that $F$ is symmetric; an elementary calculation shows that $F$ is divisible by the linear form $x+y$. By Lemma 5.2 in \cite{X} we see that $X(\bR)$ contains the lines 
\[\{[s,t,s,t] : s,t \in \bR\}, \{[s,t,t,s] : s,t, \in \bR\}, \{[s,-s,t,-t] : s,t \in \bR\}\]
in $\bP^3(\bR)$. These lines all lie in the plane given by the equation 
\[x_1 + x_2 - x_3 - x_4 = 0. \]
Each of these three lines is plainly rational, hence $X$ contains three rational, co-planar lines. Now Theorem 3.1 in \cite{X} gives that $F$ is reducible if and only if $\Aut F$ contains an element of order 2, which completes the proof. \end{proof}

\begin{lemma} \label{EFZ lemma} Let $F$ be a binary form with integer coefficients, non-zero discriminant and degree $d \geq 3$. Then, for each $\ep > 0$, 
\[N_F^{(2)}(Z) = O_{F,\ep} \left(Z^{\beta_F + \ep}\right),\]
where $\beta_F$ is given by (\ref{Beta D}) and (\ref{Beta DD}). 

\end{lemma}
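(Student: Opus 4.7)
The strategy is to convert each $(x,y) \in \N_F^{(2)}(Z)$ into an integer point on the projective surface
\[X_F : F(x_1,x_2) - F(x_3,x_4) = 0 \subset \bP^3,\]
and then count these via the determinant-method results (\ref{Salberger 1}), (\ref{Heath-Brown 1}), (\ref{Salberger 2}). By definition, each $(x,y) \in \N_F^{(2)}(Z)$ admits a witness $(x', y') \in \bZ^2$ with $F(x',y') = F(x,y) \neq 0$ and $(x',y') \notin \Aut F \cdot (x, y)$; the tuple $(x, y, x', y')$ then gives an integer point $P$ on $X_F$ which avoids every ``diagonal'' line $L_A := \{[s:t: A_{11}s + A_{12}t : A_{21}s + A_{22}t]: [s:t] \in \bP^1\}$ associated to $A \in \Aut F$, since any such $P \in L_A$ would force $(x',y') = A(x,y)$, a contradiction.

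First I would truncate the height of $P$. If $F$ has a real linear factor, applying Lemma \ref{HB lemma} separately to $(x,y)$ and $(x', y')$ with a parameter $\gamma > 1/d$ to be optimized reduces us, at total cost $O_F(Z^{1/d}\log Z + Z^{1 - (d-2)\gamma})$, to the case $\max(|x|,|y|,|x'|,|y'|) \leq Z^\gamma$. If $F$ has no real linear factor (so $d$ is even), then $F$ is definite and $|F(x,y)| \geq c_F \max(|x|,|y|)^d$, so automatically $\max(|x|,|y|,|x'|,|y'|) \ll_F Z^{1/d}$ and one may take $\gamma = 1/d$ with no truncation error.

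Next I would classify the lines on $X_F$. By the Colliot-Th\'el\`ene estimate cited just before (\ref{Salberger 1}), $X_F$ contains $O_d(1)$ lines, falling into three types: (i) lines in the singular locus $\{(x_1,x_2) = 0\} \cup \{(x_3,x_4) = 0\}$, on which $F(x,y) = 0$ (excluded); (ii) diagonal lines $L_A$ for $A \in \Aut F$, whose integer points correspond to essentially represented integers (excluded); and (iii) at most $O_d(1)$ remaining ``exceptional'' lines, each contributing $O(Z^\gamma)$ integer points of height $\leq Z^\gamma$, hence $O_d(Z^\gamma)$ in total. For points of height $\leq Z^\gamma$ lying on no line of $X_F$, the determinant method gives: $O(Z^{12\gamma/7 + \ep})$ when $d = 3$ and $F$ is irreducible, by (\ref{Salberger 1}); $O(Z^{4\gamma/9 + \ep})$ when $d = 3$ and $F$ is reducible, applicable by (\ref{Heath-Brown 1}) together with Lemma \ref{red cubics}; and $O(Z^{3\gamma/\sqrt d + \ep} + Z^{\gamma + \ep})$ when $d \geq 4$, by (\ref{Salberger 2}).

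Combining, one obtains a bound of the shape
\[N_F^{(2)}(Z) \ll Z^{a(F)\gamma + \ep} + Z^{\gamma + \ep} + Z^{1-(d-2)\gamma} + Z^{1/d}\log Z,\]
with $a(F) \in \{12/7, 4/9, 3/\sqrt d\}$ depending on the splitting type and degree. Optimizing $\gamma$ case by case gives the announced $\beta_F$: e.g., $12\gamma/7 = 1-\gamma$ yields $\gamma = 7/19$ and exponent $12/19$ for $d=3$ irreducible; balancing $\gamma = 1-\gamma$ (the $Z^\gamma$ term from truncation and lines dominating the Heath-Brown term) forces $\gamma = 1/2$ for $d=3$ reducible; for $4 \leq d \leq 8$ with a real linear factor, $3\gamma/\sqrt d = 1 - (d-2)\gamma$ gives $\gamma = \sqrt d/((d-2)\sqrt d + 3)$ and exponent $3/((d-2)\sqrt d + 3)$; for $d \geq 9$ with a real linear factor, $\gamma = 1/(d-1)$ balances $Z^\gamma$ against $Z^{1-(d-2)\gamma}$; and in the no-linear-factor cases the automatic choice $\gamma = 1/d$ delivers the exponents in (\ref{Beta DD}). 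The principal obstacle is Step 3 — the classification of lines on $X_F$. The surface $X_F$ is singular along $\{(x_1,x_2) = 0\} \cup \{(x_3,x_4) = 0\}$, so one must first pass to the smooth open locus before invoking Colliot-Th\'el\`ene's count; one must also confirm that any ``exceptional'' line, not of diagonal type and not contained in the zero-locus of $F$, contributes no worse than $O(Z^\gamma)$ integer points, and in the reducible cubic case verify via Lemma \ref{red cubics} that Heath-Brown's improved bound (\ref{Heath-Brown 1}) is indeed applicable.
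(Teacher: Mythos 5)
Your overall strategy is the same as the paper's: split $\N_F^{(2)}(Z)$ at height $Z^{\gamma}$, handle the large points with Lemma \ref{HB lemma} (or with the definiteness of $F$ when it has no real linear factor), map the small points to integer points on the surface $X\colon F(x_1,x_2)-F(x_3,x_4)=0$, invoke (\ref{Salberger 1}), (\ref{Heath-Brown 1}) and (\ref{Salberger 2}) off the lines, and optimize $\gamma$; your optimization does reproduce (\ref{Beta D}) and (\ref{Beta DD}). One correction before the main point: $X$ is \emph{not} singular along $\{(x_1,x_2)=0\}\cup\{(x_3,x_4)=0\}$. That locus is a pair of lines of $\bP^3$ which are not contained in $X$, and since $\Delta(F)\ne 0$ the only common zero of $F_x$ and $F_y$ is the origin, so the four partial derivatives of $F(x_1,x_2)-F(x_3,x_4)$ have no common zero in $\bP^3$; thus $X$ is smooth. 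This is not an obstacle to be worked around but the very hypothesis under which the cited point counts and the $O_d(1)$ bound on the number of lines on $X$ apply directly.

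The genuine gap is the step you flag but do not carry out: that each ``exceptional'' line contributes only $O(Z^{\gamma})$ integer points. This is the actual content of the lemma and cannot be waved through, because a line defined over $\bQ$ carries on the order of $B^{2}$ integer points of height at most $B$, so a single rational exceptional line would contribute $O(Z^{2\gamma})$ and destroy every exponent in (\ref{Beta D}). The paper closes this by writing every line of $\bP^3$ in one of two normal forms and checking that a line contained in $X$ either (a) forces $F$ to vanish identically on the relevant coordinates (here $\Delta(F)\ne 0$ is used to rule out $F$ being a perfect $d$-th power), so that its integer points are excluded by the condition $F\ne 0$, or (b) has the diagonal shape $x_1=u_1x_3+u_2x_4$, $x_2=u_3x_3+u_4x_4$ with the corresponding matrix an automorphism of $F$ over $\bC$; in case (b) either all $u_i$ are rational, so the matrix lies in $\Aut F$ and the points are essentially represented, or some $u_i$ is irrational, the line is not defined over $\bQ$, and it carries at most one primitive integer point, hence only $O(Z^{\gamma})$ integer points in the box. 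Until you prove this trichotomy your bound is conditional. (A smaller discrepancy: for $d=3$ reducible the paper takes the cutoff $9/13$ while you take $1/2$; your choice is the one that keeps the $O(Z^{\gamma})$ line contribution consistent with $\beta_F=1/2$.)
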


\begin{proof} Let $\ep >0$. If $F$ has a linear factor over $\bR$ put

\[\eta = \begin{cases} \dfrac{7}{19} & \text{if } d = 3 \text{ and } F \text{ is irreducible}, \\ \\
\dfrac{9}{13} & \text{if } d = 3 \text{ and } F \text{ is reducible}, \\ \\
 \dfrac{\sqrt{d}}{d\sqrt{d} - 2 \sqrt{d} + 3} &\text{if } 4 \leq d \leq 8, \\ \\ \dfrac{1}{d-1} & \text{if } d \geq 9. \end{cases} \]
Otherwise put 
\[\eta = \frac{1}{d}+ \ep.\]

We will give an upper bound for $N_F^{(2)}(Z)$ by following the approach of Heath-Brown in his proof of Theorem 8 of \cite{HB1}. We first split  $\N_F^{(2)}(Z)$ into two sets:
\begin{enumerate}
\item Those points $(x,y) \in \N_F^{(2)}(Z)$ which satisfy $\max\{|x|, |y|\} \leq Z^{\eta},$ 

and

\item Those points $(x,y) \in \N_F^{(2)}(Z)$ which satisfy $\max\{|x|, |y|\} > Z^{\eta}.$ 
\end{enumerate}
We will use (\ref{Salberger 1}), (\ref{Heath-Brown 1}), and (\ref{Salberger 2}) to treat the points in category (1). Let us put
\[\G(\Bx) = F(x_1, x_2) - F(x_3, x_4).\]
We shall denote by $X$  the surface defined by $\G(\Bx) = 0$. Notice that $X$ is smooth since $\Delta(F)\neq0.$ \\

Let $N_2(X;B)$ be the number of integer points $(r_1, r_2, r_3, r_4)$ in $\bR^4$ with $\displaystyle \max_{1 \leq i \leq 4} |r_i| \leq B$  for which $(r_1,r_2,r_3,r_4)$, viewed as a point in $\bP^3$, is on $X$ but does  not lie on a line in $X$; here we do not require $\gcd(r_1, r_2, r_3, r_4) = 1$. Then
\[N_2(X;B) \leq \sum_{t=1}^B N_1\left(X; \frac{B}{t} \right)\]
and so, by (\ref{Salberger 1}), (\ref{Heath-Brown 1}), (\ref{Salberger 2}), and Lemma \ref{red cubics}, 
\[N_2(X;B) = O_\ep \left(B^{\frac{12}{7} + \ep} \right) \quad \text{ if } d = 3 \text{ and } F \text{ is irreducible},\]
\[N_2(X;B) = O_\ep \left(B^{\frac{4}{9} + \ep}\right) \quad \text{ if } d = 3 \text{ and } F \text{ is reducible},\]
and
\[N_2(X;B) = O_{d,\ep} \left(B^{\frac{3}{\sqrt{d}} + \ep} + B^{1 + \ep} \right) \quad \text{ if } d \geq 4.\]
Therefore 
\begin{equation} \label{N2 bound} N_2(X; Z^{\eta}) = O_{d, \ep} \left( Z^{\beta_F + \ep} \right). \end{equation}

It remains to deal with integer points on $X$ which lie on some line contained in $X$. Lines in $\bP^3$ may be classified into two types. They are given by the pairs
\[u_1 x_1 + u_2 x_2 + u_3 x_3 + u_4 x_4 = 0, v_3 x_3 + v_4 x_4 = 0,\]
and by
\[x_1 = u_1 x_3 + u_2 x_4, x_2 = u_3 x_3 + u_4 x_4.\]
Suppose the first type of line is on $X$. Then one of $v_3, v_4$ is non-zero, and we may assume without loss of generality that $v_3 \ne 0$. We thus have
\[x_3 = \frac{-v_4}{v_3} x_4.\]
Substituting this back into the first equation yields
\[u_1 x_1 + u_2 x_2 = -u_3 \frac{-v_4}{v_3} x_4 - u_4 x_4 = \frac{u_3 v_4 - v_3 u_4}{v_3} x_4.\]
Substituting this back into $F(x_1, x_2) = F(x_3, x_4)$ and assuming that $u_3 v_4 - v_3 u_4 \ne 0$, we see that
\begin{align*} F(x_1, x_2) & = F\left(\frac{-v_4}{v_3} x_4, x_4\right) = x_4^d F(-v_4/v_3, 1) \\
& = F\left(\frac{-v_4}{v_3}, 1 \right) \left(\frac{v_3 u_1}{u_3 v_4 - v_3 u_4} x_1 + \frac{u_2 v_3}{u_3 v_4 - u_4 v_3} x_2\right)^d.
\end{align*}
If $F(-v_4/v_3, 1) \ne 0$, then we see that $F$ is a perfect $d$-th power, which is not possible since $\Delta(F) \ne 0$. Therefore we must have $F(x_1, x_2) = 0$ which is a contradiction. Now suppose that $u_3 v_4 = v_3 u_4$. We see that $u_1, u_2$ cannot both be zero. Assume without loss of generality that $u_1 \ne 0$. Then 
\[F(x_1, x_2) = x_2^d F(-u_2/u_1, 1),\] which is not possible since $\Delta(F) \ne 0$. Therefore we must have $F(-u_2/u_1, 1) = 0$, so once again $F(x_1, x_2) = 0$. \\ 

Now suppose that $X$ contains a line of the second type. Suppose that $u_1 u_4 = u_2 u_3$. Since at least one of $u_1, u_2$ and one of $u_3, u_4$ is non-zero, we may assume that $u_1$ and $u_3$ are non-zero. Then we have
\[u_3 x_1 = u_1 u_3 x_3 + u_2 u_3 x_4 = u_1 (u_3 x_3 + u_4 x_4),\]
hence
\[(u_3/u_1)x_1 = u_3 x_3 + u_4 x_4 = x_2.\]
Thus, $F(x_1,x_2) = F(x_3,x_4)$ implies that
\[F(x_3, x_4) = x_1^d F(1,u_3/u_1) = (u_3 x_3 + u_4 x_4)^d(u_1/u_3)^d F(1,u_3/u_1).\]
As before we must have $F(x_3, x_4) = 0$. \\ 

The last case is a line of the second type and for which $u_1 u_4 \ne u_2 u_3$. Such a line yields the equation
\[F(x_3, x_4) = F(u_1 x_3 + u_2 x_4, u_3 x_3 + u_4 x_4).\]
If $(r_1, r_2,r_3,r_4)$ is an integer point on $X$ on such a line and there is no element $A$ of $\Aut F$ which maps $(r_1,r_2)$ to $(r_3,r_4)$ then it follows that at least one of $u_1, u_2, u_3$ and $u_4$ is  not rational. Therefore, $\Bx = (x_1, x_2, x_3, x_4)$ must lie on a line which is not defined over $\bQ$ and hence has at most one primitive integer point on it. Thus there are at most $O\left(Z^{\eta}\right)$ integer points whose coordinates have absolute value at most $Z^{\eta}$ which lie on it. Since $X$ is smooth it follows from a classical result of Salmon and Clebsch, see p. 559 of \cite{Sal} or \cite{BSa}, that there are at most $O_{d}(1)$ lines on $X$ and so at most $O_{d}(Z^{\eta})$ integer points whose coordinates have absolute value at most $Z^{\eta}$ on lines on $X$ which are not defined over $\bQ$. This, together with (\ref{N2 bound}), shows that the number of points in category (1) is at most
\[O_{d,\ep} \left(Z^{\beta_F + \ep}\right).\]

When $F$ has a linear factor over $\bR$ we apply Lemma \ref{HB lemma} with $\gamma = \eta$ to conclude that  the number of points in category (2) is at most $O_{F,\ep}\left(Z^{\beta_F + \ep}\right)$. Otherwise we may write  
\[F(x,y)= \prod_{j=1}^d L_j(x,y)\]
with say $L_j(x,y)=\lambda_jx+\theta_jy$ where $\lambda_j$ and $\theta_j$ are non-zero complex numbers whose ratio is not a real number. But then
\[ |L_j(x,y)| \gg_{\lambda_j,\theta_j} \max(|x|,|y|)\]
and so
\[ |F(x,y)| \gg_{F} \max(|x|,|y|)^d.\]
Therefore, in this case the number of points in category (2) is at most $O_{F,\ep}(1)$ and the result now follows.\end{proof} 
 
In \cite{HB1} Heath-Brown proved that for each $\ep > 0$ the number of integers $h$ of absolute value at most $Z$ which are not essentially represented by $F$ is
\begin{equation} \label{N22 bound}   O_{F,\ep} \left(Z^{\frac{12d + 16}{9d^2 - 6d + 16} + \ep}\right),  \end{equation}
whenever $F$ is a binary form with integer coefficients and non-zero discriminant. This follows from the remark on page 559 of \cite{HB1} on noting that the numerator of the exponent should be $12d + 16$ instead of $12d$. Observe that the exponent is less than $2/d$ for $\ep$ sufficiently small. It follows from (\ref{N22 bound}) that Lemma \ref{EFZ lemma} holds with $\beta_d$ replaced by the larger quantity given by the exponent of $Z$ in (\ref{N22 bound}). To see this we denote, for any positive integer $h$, the number of prime factors of $h$ by $\omega(h)$ and the number of positive integers which divide $h$ by $\tau(h)$. By Bombieri and Schmidt \cite{BS} when $F$ is irreducible and by Stewart \cite{ST1} when $F$ has non-zero discriminant, if $h$ is a non-zero integer the Thue equation
\begin{equation} \label{Thueequation} F(x,y)=h, \end{equation}
has at most $2800d^{1+\omega(h)}$ solutions in coprime integers $x$ and $y$. Therefore the number of solutions of (\ref{Thueequation}) in integers $x$ and $y$ is at most
\begin{equation} \label{Thueestimate} 2800\tau(h)d^{1+\omega(h)}. \end{equation}
Our claim now follows from (\ref{N22 bound}), (\ref{Thueestimate}) and Theorem 317 of \cite{HW}.

\begin{lemma} \label{essential reps lemma} Let $F$ be a binary form with integer coefficients, non-zero discriminant and degree $d \geq 3$. Then with $A_F$ defined as in (\ref{AF defn}), 
\[N_F^{(1)}(Z) = A_F Z^{\frac{2}{d}} + O_{F,\ep} \left(Z^{\beta_F + \ep} \right)\]
where $\beta_F$ is given by (\ref{Beta D}) and (\ref{Beta DD}).

\end{lemma}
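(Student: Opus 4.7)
The plan is to obtain the asymptotic for $N_F^{(1)}(Z)$ by a direct subtraction: every pair $(x,y) \in \bZ^2$ with $0 < |F(x,y)| \leq Z$ is counted either in $\N_F^{(1)}(Z)$ or in $\N_F^{(2)}(Z)$, and these sets are disjoint by definition, so
\[
N_F^{(1)}(Z) \;=\; N_F(Z) \;-\; N_F^{(2)}(Z).
\]
I would then invoke Lemma \ref{Mahler Thm} (Mahler's asymptotic) for the main term and Lemma \ref{EFZ lemma} for the non-essential contribution, obtaining
\[
N_F^{(1)}(Z) \;=\; A_F Z^{\frac{2}{d}} \;+\; O_F\!\left(Z^{\frac{1}{d-1}}\right) \;+\; O_{F,\ep}\!\left(Z^{\beta_F + \ep}\right).
\]

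The only thing left is to check that the Mahler error $Z^{1/(d-1)}$ is dominated by $Z^{\beta_F + \ep}$ in every one of the cases defining $\beta_F$ in (\ref{Beta D}) and (\ref{Beta DD}). I would verify this by a short case-by-case comparison: for $d=3$ irreducible with a real linear factor, $\beta_F = 12/19 > 1/2 = 1/(d-1)$; for $d=3$ reducible, $\beta_F = 1/2 = 1/(d-1)$, absorbed by the factor $Z^\ep$; for $4 \leq d \leq 8$ with a real linear factor, one checks that $3/((d-2)\sqrt{d}+3) \geq 1/(d-1)$ (the tightest case is $d=8$, giving $3/(6\sqrt{8}+3) > 1/7$); for $d \geq 9$ with a real linear factor, $\beta_F = 1/(d-1)$ exactly, and the $Z^\ep$ absorbs equality; and in the no-real-linear-factor cases one verifies $3/8 > 1/3$, $1/(2\sqrt{6}) > 1/5$, and $1/(d-1) = 1/(d-1)$ for $d \geq 8$. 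In each instance the Mahler error is subsumed into $O_{F,\ep}(Z^{\beta_F + \ep})$, which yields the stated estimate.

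There is no real obstacle here: the lemma is essentially a bookkeeping consequence of the two previous lemmas, and the only content is the inequality $1/(d-1) \leq \beta_F$ in each case, which holds by construction of $\beta_F$ (the exponents in (\ref{Beta D}) and (\ref{Beta DD}) were chosen precisely so that they match or exceed the Mahler remainder while reflecting the bound from (\ref{Salberger 1}), (\ref{Heath-Brown 1}), (\ref{Salberger 2}) used in Lemma \ref{EFZ lemma}). If anything needs care, it is simply confirming the borderline cases $d \geq 9$ with a linear factor and $d \geq 8$ without, where equality holds and one genuinely needs the $\ep$-loss.
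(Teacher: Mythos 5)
Your proof is correct and is essentially identical to the paper's (the paper simply states that the lemma ``is an immediate consequence of Lemmas \ref{Mahler Thm} and \ref{EFZ lemma} since $1/(d-1)$ is less than or equal to $\beta_F$''). Your case-by-case verification of the inequality $1/(d-1)\leq\beta_F$ is accurate, including the borderline equalities absorbed by the $Z^{\ep}$ factor.
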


\begin{proof} This is an immediate consequence of Lemmas \ref{Mahler Thm} and \ref{EFZ lemma} since $1/(d-1)$ is less than or equal to $\beta_F$ .\end{proof}

%%%%%%%%%%%%%%%%%%%%%%%%%%%%%%%%%%%%%%%%%%%%%%%%%%%%%%%%%
\section{The automorphism group of $F$ and associated lattices}
\label{auto group and lattices}
%%%%%%%%%%%%%%%%%%%%%%%%%%%%%%%%%%%%%%%%%%%%%%%%%%%%%%%%%

For any element $A$ in $\GL_2(\bQ)$ we denote by $\Lambda(A)$ the lattice of $(u,v)$ in $\bZ^2$ for which $\displaystyle A \binom{u}{v}$ is in $\bZ^2$. 

\begin{lemma} \label{gen det char} Let $F$ be a binary form with integer coefficients and non-zero discriminant. Let $A$ be in $\Aut F$. Then there exists a unique positive integer $a$ and coprime integers $a_1, a_2, a_3, a_4$ such that
\begin{equation} \label{primitive form} A = \frac{1}{a} \begin{pmatrix} a_1 & a_2 \\ a_3 & a_4 \end{pmatrix},\end{equation}
and
\begin{equation} \label{gen det} a = d (\Lambda(A)).\end{equation}

\end{lemma}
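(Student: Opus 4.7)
The proof splits cleanly into two independent parts: (i) the existence and uniqueness of the representation $A = \frac{1}{a}\begin{pmatrix} a_1 & a_2 \\ a_3 & a_4 \end{pmatrix}$ with positive $a$ and coprime integer entries, and (ii) the identification $a = d(\Lambda(A))$. The essential input for (i) is that $A \in \Aut F$ forces $\det A = \pm 1$: from the standard transformation formula $\Delta(F_A) = (\det A)^{d(d-1)} \Delta(F)$, combined with $F_A = F$ and $\Delta(F) \neq 0$, we obtain $(\det A)^{d(d-1)} = 1$; since $d(d-1)$ is even and $\det A \in \bQ$, this gives $\det A = \pm 1$.

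For (i), I clear denominators and remove common factors to write $A = (p/q) M$ with $p, q$ positive coprime integers and $M \in M_2(\bZ)$ having entries of gcd $1$. Then $\det A = (p/q)^2 \det M = \pm 1$ yields $p^2 \det M = \pm q^2$, so $p^2 \mid q^2$; combined with $\gcd(p,q)=1$ this forces $p=1$, and setting $a = q$ gives the asserted form. For uniqueness, if $A = (1/a)M = (1/b)N$ are two such representations with $a,b > 0$, then $bM = aN$; taking the gcd of the entries on each side gives $b \gcd(m_{ij}) = a \gcd(n_{ij})$, i.e.\ $b = a$, and hence $M = N$.

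For (ii), I apply the Smith Normal Form: write $M = P D Q$ with $P, Q \in \GL_2(\bZ)$ and $D = \mathrm{diag}(d_1, d_2)$, $d_1 \mid d_2$. The first elementary divisor $d_1$ equals the gcd of the entries of $M$, which is $1$; and $d_1 d_2 = |\det M| = a^2 |\det A| = a^2$, so $d_2 = a^2$. Now $v \in \Lambda(A)$ means $Mv \in a\bZ^2$, which (after substituting $w = Qv$ and using unimodularity of $P$) is equivalent to $\mathrm{diag}(1, a^2) w \in a\bZ^2$, i.e.\ $a \mid w_1$ (the condition $a \mid a^2 w_2$ being automatic). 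Thus $\Lambda(A) = Q^{-1}(a\bZ \oplus \bZ)$, and since $Q \in \GL_2(\bZ)$ preserves the lattice determinant, $d(\Lambda(A)) = d(a\bZ \oplus \bZ) = a$.

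There is no serious obstacle; the lemma is essentially a bookkeeping result, and the only genuine content is the use of $\det A = \pm 1$ in (i) and the Smith Normal Form in (ii). The former is what prevents $A$ from requiring a rational (non-integer-reciprocal) leading factor, and the latter is what couples the gcd of the entries of $M$ (equal to $1$) to the index $[\bZ^2 : \Lambda(A)]$ via the product of elementary divisors.
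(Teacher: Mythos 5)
Your proof is correct, and while the first half (existence and uniqueness of the primitive representation $A=\tfrac1a M$) matches the paper's least-common-denominator construction, your proof of $a=d(\Lambda(A))$ takes a genuinely different route. The paper argues prime by prime: writing $\Lambda(A)$ as the set of $(u,v)$ with $a_1u+a_2v\equiv a_3u+a_4v\equiv 0 \pmod a$, it localizes at each prime power $p^k\,\|\,a$, uses coprimality of the $a_i$ to invert one coefficient, and observes that the second congruence then reduces to $(a_1a_4-a_2a_3)v\equiv 0\pmod{p^k}$, which is vacuous because $|a_1a_4-a_2a_3|=a^2$; hence each local lattice has index $p^k$ and the Chinese Remainder Theorem gives $d(\Lambda(A))=a$. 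You instead invoke the Smith Normal Form $M=PDQ$ and identify the elementary divisors as $1$ and $a^2$ from the gcd of the entries and $|\det M|=a^2$, whence $\Lambda(A)=Q^{-1}(a\bZ\oplus\bZ)$ has determinant $a$. Both arguments hinge on exactly the same two facts --- primitivity of $M$ and $|\det M|=a^2$ --- so they are morally the same computation; the SNF packaging is cleaner and global, while the paper's congruence manipulation is more elementary and, importantly, is reused verbatim later (in Lemma \ref{redundancy lemma} the authors repeatedly cite ``as in the proof of Lemma \ref{gen det char}'' to reduce a pair of congruences to a single one modulo a prime power), which is presumably why they set it up that way. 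Two small points in your favour: you justify $\det A=\pm1$ via the covariance of the discriminant, which the paper merely asserts, and you prove uniqueness explicitly, which the paper leaves implicit in the construction.
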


\begin{proof} If $A = \begin{pmatrix} \alpha_1 & \alpha_2 \\ \alpha_3 & \alpha_4 \end{pmatrix}$ is in $\GL_2(\bQ)$, we write 
\[\alpha_i = \frac{a_i}{a}\]
for $i = 1, 2, 3, 4$ where $a$ is the least common denominator of the $\alpha_i$'s. This yields the form given in (\ref{primitive form}). Then $\Lambda(A)$ is the set of $(u,v)$ in $\bZ^2$ for which 
\[a_1 u + a_2 v \equiv 0 \pmod{a}\]
and
\[a_3 u + a_4 v \equiv 0 \pmod{a}.\]

For each prime $p$ let $k$ be the largest power of $p$ which divides $a$. We define the lattice $\Lambda^{(p)}(A)$ to be the set of $(u,v)$ in $\bZ^2$ for which
\begin{equation} \label{generic congruence 1} a_1 u + a_2 v \equiv 0 \pmod{p^k}
\end{equation}
and
\begin{equation} \label{generic congruence 2} a_3 u + a_4 v \equiv 0 \pmod{p^k}. \end{equation}
Then 
\begin{equation} \label{p-adic intersect} \Lambda(A) = \bigcap_p \Lambda^{(p)}(A),\end{equation}
where the intersection is taken over all primes $p$, or equivalently over primes $p$ which divide $a$. \\

Since $a_1, a_2, a_3$ and $a_4$ are coprime at least one of them is not divisible by $p$. Suppose, without loss of generality, that $p$ does not divide $a_1$. Then $a_1^{-1}$ exists modulo $p^k$. Thus if (\ref{generic congruence 1}) holds then 
\[u \equiv - a_1^{-1} a_2 v \pmod{p^k}\]
and (\ref{generic congruence 2}) becomes
\begin{equation} \label{redundant cong 1} (a_1 a_4 - a_2 a_3)v \equiv 0 \pmod{p^k}. \end{equation}

But $A$ is in $\Aut F$ and so $|\det(A)| = 1$. Thus 
\[|a_1 a_4 - a_2 a_3| = a^2\]
and (\ref{redundant cong 1}) holds regardless of the value of $v$. Therefore the elements of the lattice $\Lambda^{(p)}(A)$ are determined by the congruence relation (\ref{generic congruence 1}). It follows that 
\[d (\Lambda^{(p)} (A)) = p^k\]
and by (\ref{p-adic intersect}) and the Chinese Remainder Theorem we obtain (\ref{gen det}). 
\end{proof}

\begin{lemma} \label{redundancy lemma} Let $F$ be a binary form with integer coefficients, non-zero discriminant and degree $d \geq 3$. If $A$ is an element of order $3$ in $\Aut F$ then 
\begin{equation} \label{C3 C6} \Lambda(A) = \Lambda(A^2).\end{equation}
If $\Aut F$ is equivalent to $\DD_3, \DD_4$ or $\DD_6$ then 
\begin{equation} \label{dihedral intersect} \Lambda_i \cap \Lambda_j = \Lambda \text{ for } i \ne j. \end{equation}
Further $m = \lcm(m_1, m_2, m_3)$ when $\Aut F$ is equivalent to $\DD_4$ and $m = \lcm(m_1, m_2, m_3, m_4)$ when $\Aut F$ is equivalent to $\DD_3$ or $\DD_6$.

\end{lemma}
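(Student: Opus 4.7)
For $A \in \Aut F \subseteq \GL_2(\bQ)$ of order exactly $3$, the eigenvalues of $A$ are cube roots of unity. Rationality of $\Tr(A)$ together with $A \ne I$ rules out every case except eigenvalues $\omega, \omega^2$ (the two primitive cube roots of unity), so the minimal polynomial of $A$ is $X^2 + X + 1$ and $A^2 = -A - I$. For any $v \in \bZ^2$, this gives $A^2 v = -Av - v$, which belongs to $\bZ^2$ if and only if $Av$ does. Therefore $\Lambda(A) = \Lambda(A^2)$.

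\textbf{Reduction to a local problem for Parts 2--3.} For the dihedral statements, I would reduce both the pairwise identity $\Lambda_i \cap \Lambda_j = \Lambda$ and the $\lcm$ formula for $m$ to prime-local problems using Lemma \ref{gen det char}, which expresses every lattice $\Lambda(B)$ as $\bigcap_p \Lambda^{(p)}(B)$. At each prime $p$ it suffices to prove $\Lambda_i^{(p)} \cap \Lambda_j^{(p)} = \Lambda^{(p)}$ and $v_p(m) = \max_i v_p(m_i)$. Since $\Aut F$ is a finite subgroup of $\GL_2(\bQ_p)$ it stabilizes some lattice in $\bQ_p^2$, so there exists $T_p \in \GL_2(\bQ_p)$ with $T_p^{-1} (\Aut F) T_p \subseteq \GL_2(\bZ_p)$. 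Applying the Cartan decomposition to $T_p$ and absorbing a $\GL_2(\bZ_p)$-factor into the embedding of $\Aut F$ (which, for primes $p$ not dividing $|\Aut F|$, is $\GL_2(\bZ_p)$-conjugate to the standard Table 1 representative), one reduces to $T_p = \operatorname{diag}(p^{-a}, p^{-b})$ with $a \le b$ and $T_p^{-1}(\Aut F) T_p$ equal to the standard dihedral group $G$ from Table 1.

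\textbf{Local computation and main obstacle.} Setting $c = b - a \ge 0$, a direct computation of $A_i = T_p g_i T_p^{-1}$ for each standard generator $g_i = \begin{pmatrix}\alpha & \beta \\ \gamma & \delta\end{pmatrix}$ of $G$ gives $A_i = \begin{pmatrix}\alpha & \beta p^c \\ \gamma p^{-c} & \delta\end{pmatrix}$, so that $A_i \in \GL_2(\bZ_p)$ precisely when $\gamma \in p^c \bZ_p$; otherwise $\Lambda_i^{(p)} = p^c \bZ_p \times \bZ_p$. Inspection of Table 1 shows that every $(2,1)$-entry of a standard generator lies in $\{-1,0,1\}$, and that among the relevant generators exactly one has $\gamma = 0$ (namely $\tau\rho$ for $\DD_4$ and $\DD_3$, and $\tau\sigma$ for $\DD_6$); consequently, among the two relevant generators for $\DD_4$ (resp.\ three for $\DD_3$ and $\DD_6$) with $\gamma \ne 0$, all yield the \emph{same} sublattice $p^c \bZ_p \times \bZ_p$, while the remaining one yields $\bZ_p^2$. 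Hence $\Lambda_i^{(p)} \cap \Lambda_j^{(p)} = p^c \bZ_p \times \bZ_p = \Lambda^{(p)}$ for every $i \ne j$ and $v_p(m) = c = \max_i v_p(m_i)$; patching over primes yields the global identities. The principal technical obstacle is the Cartan reduction at primes $p$ dividing $|\Aut F|$ (namely $p = 2$ for $\DD_4$ and $\DD_6$, and $p = 3$ for $\DD_3$ and $\DD_6$), where $G$ may admit several $\GL_2(\bZ_p)$-conjugacy classes of embeddings and each such class must be checked separately; in every case the local picture above persists, ultimately as a consequence of the relation $A_i A_j \equiv A_k \pmod{\{\pm I\}}$ in $\Aut F/\{\pm I\}$ for $\DD_4$ (and its $S_3$-analogue for $\DD_3$ and $\DD_6$), which forces the $p$-local lattices to collapse onto at most two distinct values.
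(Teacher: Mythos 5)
Your one-line proof of (\ref{C3 C6}) is correct and takes a genuinely different, cleaner route than the paper: since a rational matrix of order $3$ has minimal polynomial $X^2+X+1$, the identity $A^2v=-Av-v$ immediately gives $\Lambda(A)=\Lambda(A^2)$, whereas the paper conjugates $A$ to the standard order-$3$ matrix and compares the resulting congruences entry by entry.

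The dihedral part, however, has a genuine gap, and it is not confined to the primes dividing $|\Aut F|$ as you suggest. Writing $T_p$ locally as $k_1Dk_2$ with $k_1,k_2\in\GL_2(\bZ_p)$ and $D$ diagonal, you may absorb one unimodular factor harmlessly (it is a common change of basis of $\bZ_p^2$ preserving all intersections and indices), but the other factor conjugates the group: what you are left with is $D\,(kGk^{-1})\,D^{-1}$ for some $k\in\GL_2(\bZ_p)$, not $D\,G\,D^{-1}$ with $G$ the literal Table 1 representative. You cannot simultaneously make $T_p$ diagonal and the conjugated group standard, because undoing the conjugation by $k$ destroys the diagonality of $D$. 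Hence the step ``inspection of Table 1 shows every $(2,1)$-entry lies in $\{-1,0,1\}$ and exactly one generator has $\gamma=0$'' is applied to the wrong matrices: the generators of $kGk^{-1}$ have arbitrary entries in $\bZ_p$, their $(2,1)$-entries need not be zero or units, and which generator (if any) becomes $p$-integral depends on $T$, not on Table 1. For instance, with $\Aut F=T^{-1}\DD_4T$, $T=\left(\begin{smallmatrix}1&0\\1&5\end{smallmatrix}\right)$ and $p=5$, the three conjugated involutions have $(2,1)$-entries $-2/5$, $0$, $-2/5$, and it is the conjugate of $\left(\begin{smallmatrix}0&1\\1&0\end{smallmatrix}\right)$ --- not of $\operatorname{diag}(1,-1)$ --- that is integral. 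What must actually be proved is that among the relevant generators $h_i$ of $kGk^{-1}$ the minimum of $\ord_p$ of the $(2,1)$-entries is attained at least twice for $\DD_4$, and at least three times out of four for $\DD_3$ and $\DD_6$; this is the entire content of the lemma, and your closing sentence only gestures at it. The relation $h_ih_j=\pm h_k$ can indeed be made to close the $\DD_4$ case (if the minimum were attained only at $h_1$, the four products involving $h_1$ force $p\mid\alpha_2,\alpha_3,\delta_2,\delta_3$, contradicting $\det h_2=\pm1$), but the $S_3$ configuration of three reflections and a rotation is combinatorially harder --- it is exactly where the paper must analyse the exceptional residue classes modulo $3$ --- and your proposal contains no argument for it. The paper sidesteps the local-conjugation issue entirely by writing out the congruences defining each $\Lambda_i$ in terms of the entries of the single global matrix $T$ and comparing the gcds of their coefficients.
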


\begin{proof} Let us first prove (\ref{C3 C6}). Then either $A$ or $A^2$ is conjugate in $\GL_2(\bQ)$ to $\begin{pmatrix} 0 & 1 \\ -1 & -1 \end{pmatrix}$ and we may assume we are in the former case. Let $T$ be an element of $\GL_2(\bQ)$ with
\begin{equation} \label{transfer matrix} T = \begin{pmatrix} t_1 & t_2 \\ t_3 & t_4 \end{pmatrix},\end{equation}
where $t_1, t_2, t_3$ and $t_4$ are coprime integers for which 
\begin{equation} \label{conjugated matrix} A = T^{-1} \begin{pmatrix} 0 & 1 \\ -1 & -1 \end{pmatrix} T. \end{equation}
Put $t = t_1 t_4 - t_2 t_3$. Then
\[A = \frac{1}{t} \begin{pmatrix} t_1 t_2 + t_2 t_3 + t_3 t_4 & t_2^2 + t_4^2 + t_2 t_4 \\ -t_1 t_3 - t_3^2 - t_1^2 & -t_1 t_4 - t_3 t_4 - t_1 t_2 \end{pmatrix}\]
and
\[A^2 = \frac{1}{t} \begin{pmatrix} - t_1 t_2 - t_3 t_4 - t_1 t_4 & - t_2^2 - t_4^2 - t_2 t_4 \\ t_1^2 + t_3^2 + t_1 t_3 & t_1 t_2 + t_3 t_4 + t_2 t_3 \end{pmatrix},\]
hence $\Lambda(A)$ is the set of $(u,v) \in \bZ^2$ for which
\begin{equation} \label{C3 cong 1} (t_1 t_2 + t_2 t_3 + t_3 t_4) u + (t_2^2 + t_4^2 + t_2 t_4) v \equiv 0 \pmod{t} \end{equation} 
and
\begin{equation} \label{C3 cong 2} (t_1 t_3 + t_3^2 + t_1^2) u + (t_1 t_4 + t_3 t_4 + t_1 t_2) v \equiv 0 \pmod{t}. \end{equation}
Similarly, $\Lambda(A^2)$ is the set of $(u,v) \in \bZ^2$ for which
\begin{equation} \label{C3 cong 3} (t_1 t_2 + t_1 t_4 + t_3 t_4) u + (t_2^2 + t_4^2 + t_2 t_4)v \equiv 0 \pmod{t} \end{equation}
and
\begin{equation} \label{C3 cong 4} (t_1^2 + t_3^2 + t_1 t_3) u + (t_2 t_3 + t_3 t_4 + t_1 t_2)v \equiv 0 \pmod{t}. \end{equation}

On noting that $t_1 t_4 \equiv t_2 t_3 \pmod{t}$ we see that the conditions (\ref{C3 cong 1}) and (\ref{C3 cong 2}) are the same as (\ref{C3 cong 3}) and (\ref{C3 cong 4}), hence 
\[\Lambda(A) = \Lambda(A^2).\]  

Suppose that $\Aut F$ is equivalent to $\DD_4$ under conjugation in $\GL_2(\bQ)$. Then there exists an element $T$ in $\GL_2(\bQ)$ given by (\ref{transfer matrix}) with $t_1, t_2, t_3$ and $t_4$ coprime integers for which $\Aut F = T^{-1} \DD_4 T$. Put $t = t_1 t_4 - t_2 t_3$ and note that $t \ne 0$. The lattices $\Lambda_1, \Lambda_2$ and $ \Lambda_3$ may be taken to be the lattices of $(u,v)$ in $\bZ^2$ for which 
\[T^{-1} A_i T \binom{u}{v} \in \bZ^2,\]
where 
\[A_1 = \begin{pmatrix} 0 & 1 \\ -1 & 0 \end{pmatrix}, A_2 = \begin{pmatrix} 0 & 1 \\ 1 & 0 \end{pmatrix}, A_3 = \begin{pmatrix} 1 & 0 \\ 0 & -1 \end{pmatrix}.\]
Thus $\Lambda_1$ consists of integer pairs $(u,v)$ for which
\begin{equation} \label{D4 cong 1} (t_1 t_2 + t_3 t_4)u + (t_2^2 + t_4^2) v \equiv 0 \pmod{t} 
\end{equation}

and
\begin{equation} \label{D4 cong 2} (t_1^2 + t_3^2) u + (t_1 t_2 + t_3 t_4)v \equiv 0 \pmod{t}. \end{equation}
$\Lambda_2$ consists of integer pairs $(u,v)$ for which
\begin{equation} \label{D4 cong 3} (t_1 t_2 - t_3 t_4)u + (t_2^2 - t_4^2)v \equiv 0 \pmod{t} \end{equation} 

and
\begin{equation} \label{D4 cong 4} (t_1^2 - t_3^2)u + (t_1 t_2 - t_3 t_4)v \equiv 0 \pmod{t} \end{equation} 
and $\Lambda_3$ consists of integer pairs $(u,v)$ for which 
\begin{equation} \label{D4 cong 5} 2t_2 t_3 u + 2t_2 t_4 v \equiv 0 \pmod{t} \end{equation}

and
\begin{equation} \label{D4 cong 6} 2t_1 t_3 u + 2 t_2 t_3 v \equiv 0 \pmod{t}, \end{equation}
where in (\ref{D4 cong 5}) and (\ref{D4 cong 6}) we have used the observation that
\[t_1 t_4 \equiv t_2 t_3 \pmod{t}.\]

For each prime $p$ dividing $t$ we put $h = \ord_p t$. Define $\Lambda_i^{(p)}$ for $i = 1,2,3$ to be the lattice of $(u,v)$ in $\bZ^2$ for which the congruences (\ref{D4 cong 1}) and (\ref{D4 cong 2}), (\ref{D4 cong 3}) and (\ref{D4 cong 4}), and (\ref{D4 cong 5}) and (\ref{D4 cong 6}) respectively hold with $t$ replaced by $p^h$ and define $\Lambda^{(p)}$ to be the lattice for which all of the congruences hold. We shall prove that for some reordering $(i, j, k)$ of $(1,2,3)$ we have
\begin{equation} \label{D4 pairwise intersect} \Lambda_{i}^{(p)} \supset \Lambda_{j}^{(p)} = \Lambda_{k}^{(p)}. \end{equation}
It then follows that
\begin{equation} \label{D4 total intersect} \Lambda_{r}^{(p)} \cap \Lambda_s^{(p)} = \Lambda_1^{(p)} \cap \Lambda_2^{(p)} \cap \Lambda_3^{(p)} = \Lambda^{(p)} \end{equation}
for any pair $\{r,s\}$ from $\{1,2,3\}$. But since
\begin{equation} \label{D4 p-adic intersect} \bigcap_p \left(\Lambda_r^{(p)} \cap \Lambda_s^{(p)} \right) = \Lambda_r \cap \Lambda_s \text{ and } \bigcap_p \Lambda^{(p)} = \Lambda,\end{equation}
we see that (\ref{dihedral intersect}) holds. Further 
\[\max \left \{d \left(\Lambda_1^{(p)} \right), d \left(\Lambda_2^{(p)} \right), d\left(\Lambda_3^{(p)} \right) \right \} = d\left(\Lambda^{(p)} \right)\]
and so $d(\Lambda)$ is the least common multiple of $d(\Lambda_1), d(\Lambda_2)$ and $d(\Lambda_3)$. \\

It remains to prove (\ref{D4 pairwise intersect}). Put
\[g_1 = \gcd(t_1 t_2 + t_3 t_4, t_1^2 + t_3^2, t_2^2 + t_4^2, t),\]
\[g_2 = \gcd(t_1 t_2 - t_3 t_4, t_1^2 - t_3^2, t_2^2 - t_4^2, t)\]
and
\[g_3 = \gcd(2t_2 t_3, 2t_2 t_4, 2t_1 t_3, t).\]

We shall show that $\gcd(g_1, g_2)$ is $1$ or $2$ and that

\begin{equation} \label{gcd equality} \gcd(g_1, g_2) = \gcd(g_1, g_3) = \gcd(g_2, g_3). \end{equation}

Notice that if $p$ divides $g_1$ then $t_1^2 \equiv - t_3^2 \pmod{p}$ and $t_2^2 \equiv - t_4^2 \pmod{p}$ while if $p$ divides $g_2$ then $t_1^2 \equiv t_3^2 \pmod{p}$ and $t_2^2 \equiv t_4^2 \pmod{p}$ and if $p$ divides $g_3$ then $p$ divides $2 t_2 t_3, 2 t_2 t_4$ and $2 t_1 t_3$. Thus if $p$ divides $\gcd(g_1, g_2)$ then $p$ divides $2t_1^2, 2t_2^2, 2 t_3^2$ and $2 t_4^2$; whence $p = 2$ since $\gcd(t_1, t_2, t_3, t_4) = 1$. Next suppose that $p$ divides $\gcd(g_1, g_3)$. Then $p$ divides $2 t_2 t_4$ and $t_2^2 \equiv - t_4^2 \pmod{p}$ and $p$ divides $2 t_1 t_3$ and $t_1^2 \equiv - t_3^2 \pmod{p}$. Since $\gcd(t_1, t_2, t_3, t_4) = 1$ we find that $p = 2$. Finally if $p$ divides $\gcd(g_2, g_3)$ then, as in the previous case, $p = 2$. Observe that 
\begin{equation} \label{2-order 0} 0 = \ord_2 g_1 = \ord_2 g_2 \leq \ord_2 g_3 \end{equation}
unless $(t_1, t_2, t_3, t_4)$ is congruent to $(1,0,1,0), (0,1,0,1)$ or $(1,1,1,1)$ modulo $2$ and in these cases 
\begin{equation} \label{2-order 1} 1 = \ord_2 g_1 = \ord_2 g_3 \leq \ord_2 g_2. \end{equation}
Thus (\ref{gcd equality}) follows from (\ref{2-order 0}) and (\ref{2-order 1}). \\

For each prime $p$ put $h_i = \ord_p g_i$ for $i = 1,2,3$. Then, by (\ref{gcd equality}), for some rearrangement $(i,j, k)$ of $(1,2,3)$ we have
\[h_{i} \geq h_{j} = h_{k}.\]
As in the proof of Lemma \ref{gen det char}, $\Lambda_i^{(p)}$ is defined by a single congruence modulo $p^{h - h_i}$ for $i = 1,2,3$. We check that $t$ divides the determinant of any matrix whose rows are taken from the rows determined by the coefficients of the congruence relations (\ref{D4 cong 1}), (\ref{D4 cong 2}), (\ref{D4 cong 3}), (\ref{D4 cong 4}), (\ref{D4 cong 5}), and (\ref{D4 cong 6}).  Furthermore $2t$ divides the determinant of such a matrix if $(t_1, t_2, t_3, t_4)$ is congruent to $(1,0,1,0), (0,1,0,1)$ or $(1,1,1,1)$ modulo $2$.
Since $h_{j} = h_{k}$ we see that the congruences modulo $p^{h - h_{j}}$ define identical lattices $\Lambda_{j}^{(p)}$ and $\Lambda_{k}^{(p)}$. Further, since $h_{i} \geq h_{j}$, $\Lambda_{j}^{(p)}$ is a sublattice of $\Lambda_{i}^{(p)}$ and (\ref{dihedral intersect}) follows when $\Aut F$ is equivalent to $\DD_4$. \\

Suppose now that $\Aut F$ is equivalent to $\DD_3$ under conjugation in $\GL_2(\bQ)$. There exists an element $T$ in $\GL_2(\bQ)$, as in (\ref{transfer matrix}), with $t_1, t_2, t_3$ and $t_4$ coprime integers for which 
\[\Aut F = T^{-1} \DD_3 T.\]
Define $t = t_1 t_4 - t_2 t_3$. The lattices $\Lambda_1, \Lambda_2, \Lambda_3$ and $\Lambda_4$ may be taken to be the lattices of integer pairs $(u,v)$ for which 
\[T^{-1} A_i T \binom{u}{v} \in \bZ^2\]
where 
\[A_1 = \begin{pmatrix} 0 & 1 \\ 1 & 0 \end{pmatrix}, A_2 = \begin{pmatrix} 1 & 0 \\ -1 & -1 \end{pmatrix}, A_3 = \begin{pmatrix} -1 & -1 \\ 0 & 1 \end{pmatrix} \text{ and } A_4 = \begin{pmatrix} 0 & 1 \\ -1 & -1 \end{pmatrix}.\] 
Thus $\Lambda_1$ consists of integer pairs $(u,v)$ for which 
\begin{equation} \label{D3 cong 1} (t_1 t_2 - t_3 t_4)u + (t_2^2 - t_4^2)v \equiv 0 \pmod{t} \end{equation}

and
\begin{equation} \label{D3 cong 2} (t_1^2 - t_3^2)u + (t_1 t_2 - t_3 t_4)v \equiv 0 \pmod{t}. \end{equation}
$\Lambda_2$ consists of integer pairs $(u,v)$ for which
\begin{equation} \label{D3 cong 3}  (t_1 t_2 + t_2 t_3 + t_1 t_4)u + (t_2^2 + 2 t_2 t_4)v \equiv 0 \pmod{t}
\end{equation}

and
\begin{equation} \label{D3 cong 4} (t_1^2 + 2 t_1 t_3)u + (t_1 t_2 + t_2 t_3 + t_1 t_4)v  \equiv 0 \pmod{t}.
\end{equation}
$\Lambda_3$ consists of integer pairs $(u,v)$ for which
\begin{equation} \label{D3 cong 5} (t_1 t_4 + t_2 t_3 + t_3 t_4)u + (2 t_2 t_4 + t_4^2) v \equiv 0 \pmod{t}
\end{equation}

and
\begin{equation} \label{D3 cong 6} (2 t_1 t_3 + t_3^2) u + (t_1 t_4 + t_2 t_3 + t_3 t_4)v \equiv 0 \pmod{t}. \end{equation}
$\Lambda_4$ consists of integer pairs $(u,v)$ for which
\begin{equation} \label{D3 cong 7} (t_1 t_2 + t_2 t_3 + t_3 t_4)u + (t_2^2 + t_2 t_4 + t_4^2)v \equiv 0 \pmod{t} \end{equation}
 
and
\begin{equation} \label{D3 cong 8} (t_1^2 + t_1 t_3 + t_3^2)u + (t_1 t_2 + t_1 t_4 + t_3 t_4) v \equiv 0 \pmod{t}. \end{equation}

For each prime $p$ dividing $t$ we put $h = \ord_p t$. Define $\Lambda_i^{(p)}$ for $i = 1,2,3,4$ to be the lattice of $(u,v)$ in $\bZ^2$ for which the congruences (\ref{D3 cong 1}) and (\ref{D3 cong 2}), (\ref{D3 cong 3}) and (\ref{D3 cong 4}), (\ref{D3 cong 5}) and (\ref{D3 cong 6}), and (\ref{D3 cong 7}) and (\ref{D3 cong 8}) respectively hold with $t$ replaced with $p^h$ and define $\Lambda^{(p)}$ to be the lattice for which all the congruences hold. We shall prove that for some reordering $(i, j, k, l)$ of $(1,2,3,4)$ we have
\begin{equation} \label{D3 pairwise intersect} \Lambda_{i}^{(p)} \supset \Lambda_{j}^{(p)} = \Lambda_{k}^{(p)} = \Lambda_{l}^{(p)}.\end{equation}
It then follows that 
\begin{equation} \label{D3 total intersect} \Lambda_r^{(p)} \cap \Lambda_s^{(p)} = \Lambda_1^{(p)} \cap \Lambda_2^{(p)} \cap \Lambda_3^{(p)} \cap \Lambda_4^{(p)} = \Lambda^{(p)} \end{equation}
for any pair $\{r,s\}$ from $\{1,2,3,4\}$. But since
\begin{equation} \label{D3 p-adic intersect} \bigcap_p \left(\Lambda_r^{(p)} \cap \Lambda_s^{(p)} \right) = \Lambda_r \cap \Lambda_s \text{ and } \bigcap_p \Lambda^{(p)} = \Lambda, \end{equation}
we conclude that (\ref{dihedral intersect}) holds. Further 
\[\max \left \{d \left(\Lambda_1^{(p)} \right), d \left(\Lambda_2^{(p)} \right), d\left(\Lambda_3^{(p)} \right), d\left(\Lambda_4^{(p)}\right) \right \} = d\left(\Lambda^{(p)} \right)\]
and so $d(\Lambda)$ is the least common multiple of $d(\Lambda_1), d(\Lambda_2), d(\Lambda_3)$ and $d(\Lambda_4)$. \\

It remains to prove (\ref{D3 pairwise intersect}). Put
\[g_1 = \gcd(t_1 t_2 - t_3 t_4, t_1^2 - t_3^2, t_2^2 - t_4^2, t),\]
\[g_2 = \gcd(t_1 t_2 + t_2 t_3 + t_1 t_4, t_1^2 + 2 t_1 t_3, t_2^2 + 2 t_2 t_4, t),\]
\[g_3 = \gcd(t_1 t_4 + t_2 t_3 + t_3 t_4, 2 t_1 t_3 + t_3^2, 2 t_2 t_4 + t_4^2, t),\]
and
\[g_4 = \gcd(t_1 t_2 + t_2 t_3 + t_3 t_4, t_1^2 + t_1 t_3 + t_3^2, t_2^2 + t_2 t_4 + t_4^2, t).\]
Suppose that $p$ is a prime which divides $\gcd(g_1, g_2)$. If $p$ divides $t_1$ then since $p$ divides $t_1^2 - t_3^2$ we see that $p$ divides $t_3$. Similarly if $p$ divides $t_2$ then since $p$ divides $t_2^2 - t_4^2$ we see that $p$ divides $t_4$. Since $t_1, t_2, t_3$ and $t_4$ are coprime either $p$ does not divide $t_1$ or $p$ does not divide $t_2$. In the former case since $p$ divides $t_1^2 + 2 t_1 t_3$ we find that $p$ divides $t_1 + 2 t_3$. Thus $t_1^2 \equiv 4 t_3^2 \pmod{p}$ and since $t_1^2 \equiv t_3^2 \pmod{p}$ we conclude that $p = 3$. In the latter case since $p$ divides $t_2^2 + 2 t_2 t_4$ we again find that $p = 3$. In a similar fashion we prove that if $p$ is a prime which divides $\gcd(g_i, g_j)$ for any pair $\{i,j\}$ from $\{1,2,3,4\}$ then $p = 3$. \\

Denote by
$E$ the set consisting of the $4$-tuples  $(1,1,1,1)$,  $(-1,-1,-1,-1)$, $(1,-1,1,-1)$, $(-1,1,-1,1)$,
$ (1,0,1,0)$, $(-1,0,-1,0)$, $(0,1,0,1)$ and  $(0,-1,0,-1)$.
One may check that if $(t_1, t_2, t_3, t_4)$ is not congruent modulo $3$ to an element of $E$ then for some reordering $(i,j, k, l)$ of $(1,2,3,4)$ we have 
\begin{equation} \label{3-order 0} 0 = \ord_3 g_{i} = \ord_3 g_{j} = \ord_3 g_{k} \leq \ord_3 g_{l}.\end{equation}
 
If $(t_1, t_2, t_3, t_4)$ is congruent modulo $3$ to an element of $E$ then there is some reordering $(i, j, k, l)$ of $(1,2,3,4)$ such that

\begin{equation} \label{3-order 1} 1 = \ord_3 g_{i} = \ord_3 g_{j} = \ord_3 g_{k} \leq \ord_3 g_{l}. \end{equation}
To see this we make use of the fact that 
\begin{equation} \label{3-order ineq} \ord_3 g_1 \leq \ord_3 (t_1^2 - t_3^2), \text{ } \ord_3 g_2 \leq \ord_3(t_1^2 + 2 t_1 t_3), \end{equation}
\[ \ord_3 g_3 \leq \ord_3 (2 t_1 t_3 + t_3^2) \text{ and } \ord_3 g_4 \leq \ord_3 (t_1^2 + t_1 t_3 + t_3^2), \]
to deal with the first six cases. To handle the remaining two cases, so when $(t_1, t_2, t_3, t_4)$ is congruent modulo $3$ to $(0,1,0,1)$ or $(0,-1,0, -1)$, we appeal to (\ref{3-order ineq}) but with $t_1$ and $t_3$ replaced by $t_2$ and $t_4$ respectively. \\

It now follows from (\ref{3-order 0}) and (\ref{3-order 1}) that $\gcd(g_1, g_2)$ is $1$ or $3$ and 
\begin{equation} \label{D3 pairwise gcd} \gcd(g_1, g_2) = \gcd(g_1, g_3)= \gcd(g_1,g_4)= \gcd(g_2,g_3)= \gcd(g_2,g_4)=\gcd(g_3,g_4).\end{equation}
For each prime $p$ put $h_i = \ord_p g_i$ for $i = 1,2,3,4$. Then, by (\ref{D3 pairwise gcd}) for some reordering $(i, j, k, l)$ of $(1,2,3,4)$ we have
\[h_{i} \geq h_{j} = h_{k} = h_{l}.\]
As in the proof of Lemma \ref{gen det char}, $\Lambda_{i}^{(p)}$ is defined by a single congruence relation modulo $p^{h - h_{i}}$ and $\Lambda_{j}^{(p)}, \Lambda_{k}^{(p)}$ and $\Lambda_{l}^{(p)}$ are defined by single congruences modulo $p^{h - h_j}$. We check that $t$ divides the determinant of any matrix whose rows are taken from the rows determined by the coefficients of the congruence relations (\ref{D3 cong 1}), (\ref{D3 cong 2}), (\ref{D3 cong 3}), (\ref{D3 cong 4}), (\ref{D3 cong 5}), (\ref{D3 cong 6}), (\ref{D3 cong 7}) and (\ref{D3 cong 8}) and that $3t$ divides the determinant of such a matrix if $(t_1, t_2, t_3, t_4)$ is congruent to an element of $E$.   Then since $h_j = h_k = h_l$ we see that the congruences modulo $p^{h - h_j}$ define identical lattices so 
\[\Lambda_{j}^{(p)} = \Lambda_{k}^{(p)} = \Lambda_{l}^{(p)}.\]
Further, since $h_i \geq h_j$, $\Lambda_{j}^{(p)}$ is a sublattice of $\Lambda_{i}^{(p)}$ and thus (\ref{D3 pairwise intersect}) holds and (\ref{dihedral intersect}) follows when $\Aut F$ is equivalent to $\DD_3$. \\

Finally we remark that (\ref{dihedral intersect}) holds when $\Aut F$ is equivalent to $\DD_6$ by the same analysis we used when $\Aut F$ is equivalent to $\DD_3$. 

\end{proof}

\begin{lemma} \label{binomial lemma} Let $a$ and $b$ be non-zero integers and let $d$ be an integer with $d \geq 3$. Put 
\[F(x,y) = ax^d + by^d.\]
If $a/b$ is not the $d$-th power of a rational number then when $d$ is odd
\[\Aut F = \left \{ \begin{pmatrix} 1 & 0 \\ 0 & 1 \end{pmatrix} \right \}\]
and when $d$ is even
\[\Aut F = \left \{ \begin{pmatrix} w_1 & 0 \\ 0 & w_2 \end{pmatrix} ; w_i \in \{1,-1\} , i=1,2 \right \} .\]
If $\dfrac{a}{b} = \dfrac{A^d}{B^d}$ with $A$ and $B$ coprime integers then when $d$ is odd
\[\Aut F = \left \{ \begin{pmatrix} 1 & 0 \\ 0 & 1 \end{pmatrix}, \begin{pmatrix} 0 & A/B \\ B/A & 0 \end{pmatrix} \right \} \]
and when $d$ is even
\[\Aut F = \left \{ \begin{pmatrix} w_1 & 0 \\ 0 & w_2 \end{pmatrix}, \begin{pmatrix} 0 & w_4 A/B \\  w_3B/A & 0 \end{pmatrix} ; w_i \in \{1,-1\},i=1,2,3,4 \right \}.\]
\end{lemma}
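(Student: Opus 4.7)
The plan is to determine $\Aut F$ directly by expanding the identity $F_A = F$ in the entries of $A$ and solving the resulting polynomial system. For the binomial $F$, the coefficient equations reduce to a handful of simple monomial identities, and case analysis on the zero pattern of $A$'s entries closes the problem.

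First, I write $A = \left(\begin{smallmatrix}a_1 & a_2 \\ a_3 & a_4\end{smallmatrix}\right) \in \GL_2(\bQ)$ and expand $F_A(x,y) = a(a_1 x + a_2 y)^d + b(a_3 x + a_4 y)^d$ via the binomial theorem. Matching coefficients against those of $ax^d + by^d$ yields the two corner equations $a a_1^d + b a_3^d = a$ and $a a_2^d + b a_4^d = b$, together with the middle equations $a\,a_1^{d-i}a_2^{i} + b\,a_3^{d-i}a_4^{i} = 0$ for $1 \leq i \leq d-1$.

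Next I split into three cases. If $a_2 = 0$, the middle equation at $i = d-1$ forces $a_3 a_4^{d-1} = 0$; since $a a_4^d = b \neq 0$ forbids $a_4 = 0$, we get $a_3 = 0$, so $A$ is diagonal with $a_1^d = a_4^d = 1$. If $a_2 \neq 0$ and $a_1 \neq 0$, inspection of the middle equation at $i = d-1$ forbids $a_3 = 0$ or $a_4 = 0$, so all four entries are nonzero; dividing the middle equations at $i = 1$ and $i = 2$ (possible because $d \geq 3$) then yields $a_1/a_3 = a_2/a_4$, i.e.\ $\det A = 0$, contradicting $A \in \GL_2(\bQ)$. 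Finally, if $a_1 = 0$ and $a_2 \neq 0$, the middle equation at $i = d-1$ gives $b a_3 a_4^{d-1} = 0$; since $a_3 = 0$ would make $A$ singular, we must have $a_4 = 0$, so $A$ is anti-diagonal, with the middle equations automatically satisfied (each of their terms contains a factor of $a_1$ or $a_4$) and the corner equations reducing to $a_3^d = a/b$, $a_2^d = b/a$.

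To finish, I solve for the rational $d$-th roots. The equation $t^d = 1$ in $\bQ$ forces $t = 1$ for $d$ odd and $t \in \{\pm 1\}$ for $d$ even, giving the diagonal automorphisms in the statement. For the anti-diagonal case, rational solutions exist iff $a/b$ is a $d$-th power of a rational; writing $a/b = A^d/B^d$ with coprime integers $A,B$, the rational $d$-th roots of $a/b$ and $b/a$ are unique for $d$ odd (giving one anti-diagonal automorphism) and each come in $\pm$ pairs for $d$ even (giving four, since the vacuous middle equations impose no constraint linking the two sign choices). The main obstacle---really the only subtle point---is the exhaustion of the second case: separately ruling out $a_3 = 0$ and $a_4 = 0$ when $a_1, a_2$ are nonzero, before extracting $\det A = 0$ from the middle equations. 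Once this is handled, assembling the cases reproduces the lists in the statement.
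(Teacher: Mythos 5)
Your proof is correct and takes essentially the same approach as the paper: expand $F_A=F$, compare coefficients, eliminate the case where all four entries of $A$ are non-zero by dividing two consecutive cross-term equations to force $\det A=0$, and then solve $t^d=1$ and $t^d=a/b$ over $\bQ$. The differences are cosmetic (you case-split on the vanishing of $a_1,a_2$ and need only $\det A\ne 0$, whereas the paper first shows $u_1u_2=u_3u_4=0$ and then invokes $\det A=\pm 1$), except for one point worth noting: your correctly indexed corner equations give $a_2^d=b/a$ and $a_3^d=a/b$, so the anti-diagonal automorphism you obtain is the \emph{transpose} of the matrix displayed in the statement---a discrepancy that traces back to an index swap in the paper's own displayed equations (\ref{diagonal sit}), as one can check on $F(x,y)=8x^3+y^3$.
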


\begin{proof} Let 
\[U = \begin{pmatrix} u_1 & u_2 \\ u_3 & u_4 \end{pmatrix}\]
be an element of $\Aut F$. Then $u_1, u_2, u_3, u_4$ are rational numbers with
\begin{equation} \label{det pm} u_1 u_4 - u_2 u_3 = \pm 1. \end{equation}
Since $F(u_1 x + u_2 y, u_3 x + u_4 y) = F(x,y)$ we see on comparing coefficients that
\begin{equation} \label{diagonal sit} a u_1^d + b u_2^d = a, \text{ } au_3^d + bu_4^d = b  
\end{equation}
and
\begin{equation} \label{non-diagonal sit} a u_1^j u_2^{d-j} = -b u_3^j u_4^{d-j}  \end{equation}
for $j = 1, \cdots, d-1$. \\

Suppose that $u_1 u_2 \ne 0$. Then by (\ref{non-diagonal sit}), we have $u_3 u_4 \ne 0$ as well. Therefore we may write
\[\left(\frac{u_3}{u_1} \right) \left(\frac{u_4}{u_2}\right)^{d-1} = \left(\frac{u_3}{u_1} \right)^2 \left(\frac{u_4}{u_2}\right)^{d-2},\]
which implies that $u_1 u_4 - u_2 u_3 = 0$, contradicting (\ref{det pm}). Therefore, $u_1 u_2 = 0$ and similarly $u_3 u_4 = 0$. Further, by (\ref{det pm}), either $u_1 u_4 = \pm 1$ and $u_2 = u_3 = 0$ or $u_2 u_3 = \pm 1$ and $u_1 = u_4 = 0$. In the first case, by (\ref{diagonal sit}), we have $u_1^d = 1$ and $u_4^d = 1$, hence if $d$ is odd we have $u_1 = u_4 = 1$ while if $d$ is even we have $u_1 = \pm 1$ and $u_4 = \pm 1$. In the other case, by (\ref{diagonal sit}), we have $u_2^d = \dfrac{a}{b}$ and this is only possible if there exist coprime integers $A$ and $B$ with 
\[\frac{a}{b} = \frac{A^d}{B^d}.\]
In that case $u_2 = A/B$ if $d$ is odd and $u_2 = \pm A/B$ if $d$ is even. Thus, by (\ref{det pm}), $u_3 = B/A$ if $d$ is odd and $u_3 = \pm B/A$ if $d$ is even. Our result now follows. \end{proof}

%%%%%%%%%%%%%%%%%%%%%%%%%%%%%%%%%%%%%%%%%%%%%%%%%%%%%%%%%%%%%
\section{Proof of Theorems \ref{Rep by BF MT} and \ref{WF value theorem}}
\label{BF MT Proof}
%%%%%%%%%%%%%%%%%%%%%%%%%%%%%%%%%%%%%%%%%%%%%%%%%%%%%%%%%%%%%%

If $\Aut F$ is conjugate to $\CC_1$ then every pair $(x,y) \in \bZ^2$ for which $F(x,y)$ is essentially represented with $ 0 < |F(x,y)| \leq Z$ gives rise to a distinct integer $h$ with $0 < |h| \leq Z$. It follows from Lemma \ref{EFZ lemma} and Lemma \ref{essential reps lemma} that
\[R_F(Z) = A_F Z^{\frac{2}{d}} + O_{F,\ep} \left(Z^{\beta_F + \ep}\right),\]
and we see that $W_F$ in this case is $1$. In a similar way we see that if $\Aut F$ is conjugate to $\CC_2$ then 
\[R_F(Z) = \frac{A_F}{2} Z^{\frac{2}{d}} + O_{F,\ep} \left(Z^{\beta_F + \ep}\right).\]

Next let us consider when $\Aut F$ is conjugate to $\CC_3$. Then for $A$ in $\Aut F$ with $A \ne I$ we have, by Lemma \ref{redundancy lemma}, $\Lambda(A) = \Lambda(A^2) = \Lambda$. Thus whenever $F(x,y) = h$ with $(x,y)$ in $\N_F^{(1)}(Z) \cap \Lambda$ there are two other elements $(x_1, y_1), (x_2, y_2)$ for which $F(x_i, y_i) = h$ for $i = 1,2$. When $(x,y)$ is in $\bZ^2$ but not in $\Lambda$ and $F(x,y)$ is essentially represented then $F(x,y)$ has only one representation. \\ 

Let $\omega_1, \omega_2$ be a basis for $\Lambda$ with $\omega_1 = (a_1, a_3)$ and $\omega_2 = (a_2, a_4)$. Put $F_\Lambda(x,y) = F(a_1 x + a_2 y, a_3 x + a_4 y)$ 
and notice that 
\begin{equation} \label{essential reps in lattice} |\N_F(Z) \cap \Lambda| = N_{F_\Lambda} (Z). \end{equation}
By Lemma \ref{Mahler Thm}
\begin{equation} \label{NFZ1} N_{F_\Lambda}(Z) = A_{F_\Lambda} Z^{\frac{2}{d}} + O_{F_\Lambda} \left(Z^{1/(d-1)}\right). \end{equation}
Since the quantity $\lvert \Delta(F) \rvert^{1/d(d-1)} A_F$ is invariant under $\GL_2(\bR)$
\begin{equation} \label{bean area invariance} \lvert \Delta(F) \rvert^{1/d(d-1)} A_F = \lvert \Delta(F_\Lambda) \rvert^{1/d(d-1)} A_{F_\Lambda} \end{equation}
and we see that
\begin{equation} \label{lattice jacobian} A_{F_\Lambda} = \frac{1}{d (\Lambda)} A_F = \frac{A_F}{m}. \end{equation} 
Therefore by (\ref{essential reps in lattice}), (\ref{NFZ1}) and  (\ref{lattice jacobian}) 
\begin{equation} \label{esslambda} |\N_F(Z) \cap \Lambda| = \frac{A_F}{m} Z^{\frac{2}{d}} + O_{F} \left(Z^{\frac{1}{d-1}}\right) . \end{equation}
Certainly $ \N_F^{(2)}(Z) \cap \Lambda $ is contained in $ \N_F^{(2)}(Z)$ and thus, by (\ref{esslambda}) and Lemma \ref{EFZ lemma},
\begin{equation} \label{esslambda1} |\N_F^{(1)}(Z) \cap \Lambda| = \frac{A_F}{m} Z^{\frac{2}{d}} + O_{F,\ep} \left(Z^{\beta_F + \ep}\right). \end{equation}
Each pair $(x,y)$ in $\N_F^{(1)}(Z) \cap \Lambda$ is associated with two other pairs which represent the same integer. Thus the pairs $(x,y)$ in $\N_F^{(1)}(Z) \cap \Lambda$ yield
\begin{equation} \label{extra C3} \frac{A_F}{3m} Z^{\frac{2}{d}} + O_{F,\ep} \left(Z^{\beta_F + \ep}\right) \end{equation} 
integers $h$ with $0<|h| \leq Z$.
By Lemma \ref{essential reps lemma} and (\ref{esslambda1}) the number of pairs $(x,y)$ in $\N_F^{(1)}(Z)$ which are not in $\Lambda$ is
\begin{equation} \label{not in lambda} \left(1 - \frac{1}{m}\right) A_F Z^{\frac{2}{d}} + O_{F,\ep} \left(Z^{\beta_F + \ep}\right)\end{equation} 
and each pair gives rise to an integer $h$ with $0 < |h| \leq Z$ which is uniquely represented by $F$. It follows from  (\ref{extra C3}),  (\ref{not in lambda}) and Lemma \ref{EFZ lemma} that when $\Aut F$ is equivalent to $\CC_3$ we have
\[R_F(Z) = \left(1 - \frac{2}{3m} \right) A_F Z^{\frac{2}{d}} + O_{F,\ep} \left(Z^{\beta_F + \ep}\right).\]
A similar analysis applies in the case when $\Aut F$ is equivalent to $\DD_1, \DD_2, \CC_4$ or $\CC_6$. These groups are cyclic with the exception of $\DD_2$ but $\DD_2/\{\pm I\}$ is cyclic and that is sufficient for our purposes. \\

We are left with the possibility that $\Aut F$ is conjugate to $\DD_3, \DD_4$ or $\DD_6$. We first consider the case when $\Aut F$ is equivalent to $\DD_4$. In this case, recall (\ref{esslambda1}), we have
\[|\N_F^{(1)}(Z) \cap \Lambda| = \frac{A_F}{m} Z^{\frac{2}{d}} + O_{F,\ep} \left(Z^{\beta_F + \ep}\right)\]
and since each $h$ for which $h = F(x,y)$ with $(x,y)$ in $\N_F^{(1)}(Z) \cap \Lambda$ is represented by $8$ elements of $\N_F^{(1)}(Z)$ the pairs $(x,y)$ of $\N_F^{(1)}(Z) \cap \Lambda$ yield
\begin{equation} \label{D4 total} \frac{A_F}{8m} Z^{\frac{2}{d}} + O_{F,\ep} \left(Z^{\beta_F + \ep}\right) \end{equation}
terms $h$ in $\R_F(Z)$. By Lemma \ref{redundancy lemma} we have $\Lambda_i \cap \Lambda_j = \Lambda$ for $1 \leq i < j \leq 3$; whence the terms $(x,y)$ in $\Lambda_1, \Lambda_2$ or  $\Lambda_3$ but not in $\Lambda$ for which $(x,y)$ is in $\N_F^{(1)}(Z)$ have cardinality
\[\left(\frac{1}{m_1} + \frac{1}{m_2} + \frac{1}{m_3} - \frac{3}{m}\right) A_F Z^{\frac{2}{d}} + O_{F,\ep} \left(Z^{\beta_F + \ep}\right).\]
If $(x,y)$ is in $\Lambda_1, \Lambda_2$ or  $\Lambda_3$ but not in $\Lambda$ and $h = F(x,y)$ is essentially represented then $h$ has precisely four representations. Accordingly the terms in
\[\N_F^{(1)}(Z) \cap \Lambda_i,    1 \leq i \leq 3\]
which are not in $\Lambda$ contribute 
\begin{equation} \label{D4 half total} \frac{1}{4} \left(\frac{1}{m_1} + \frac{1}{m_2} + \frac{1}{m_3} - \frac{3}{m} \right) A_F Z^{\frac{2}{d}} + O_{F,\ep} \left(Z^{\beta_F + \ep} \right) \end{equation} 
terms to $\R_F(Z)$. Finally the terms $(x,y)$ in $\N_F^{(1)}(Z)$ but not in $\Lambda_i$ for $i = 1,2,3$ have cardinality equal to
\[\left(1 - \frac{1}{m_1} - \frac{1}{m_2} - \frac{1}{m_3} + \frac{2}{m} \right) A_F Z^{\frac{2}{d}} + O_{F,\ep} \left(Z^{\beta_F + \ep}\right).\]
Each integer $h$ represented by such a term has $2$ representations and therefore these terms $(x,y)$ contribute
\begin{equation} \label{D4 quarter total} \frac{1}{2} \left(1 - \frac{1}{m_1} - \frac{1}{m_2} - \frac{1}{m_3} + \frac{2}{m} \right) A_F Z^{\frac{2}{d}} + O_{F,\ep} \left(Z^{\beta_F + \ep}\right) \end{equation}
terms to $\R_F(Z)$. It now follows from (\ref{D4 total}), (\ref{D4 half total}), (\ref{D4 quarter total}) and Lemma \ref{EFZ lemma} that
\[R_F(Z) = \frac{1}{2} \left(1 - \frac{1}{2m_1} - \frac{1}{2m_2} - \frac{1}{2m_3} + \frac{3}{4m} \right) A_F Z^{\frac{2}{d}} + O_{F,\ep}\left(Z^{\beta_F + \ep} \right),\]
as required. \\

We now treat the case when $\Aut F$ is conjugate to $\DD_3$. As before the pairs $(x,y)$ of $\N_F^{(1)}(Z) \cap \Lambda$ yield
\begin{equation} \label{D3 total} \frac{A_F}{6m} Z^{\frac{2}{d}} + O_{F,\ep} \left(Z^{\beta_F + \ep}\right) \end{equation}
terms in $\R_F(Z)$. Since $\Lambda_i \cap \Lambda_j = \Lambda$ for $1 \leq i < j \leq 3$ by Lemma \ref{redundancy lemma}, the pairs $(x,y)$ in $\N_F^{(1)}(Z) \cap \Lambda_i$ for $i = 1, 2, 3$ which are not in $\Lambda$ contribute 
\begin{equation} \label{D3 half total} \left(\frac{1}{2m_1} + \frac{1}{2m_2} + \frac{1}{2m_3} - \frac{3}{2m} \right) A_F Z^{\frac{2}{d}} + O_{F,\ep} \left(Z^{\beta_F + \ep}\right) \end{equation}
to $\R_F(Z)$. Further, the pairs $(x,y)$ in $\N_F^{(1)}(Z) \cap \Lambda_4$ which are not in $\Lambda$ contribute
\begin{equation} \label{D3 third total} \left(\frac{1}{3m_4} - \frac{1}{3m} \right) A_F Z^{\frac{2}{d}} + O_{F,\ep} \left(Z^{\beta_F + \ep}\right) \end{equation}
terms to $\R_F(Z)$. Furthermore the pairs $(x,y)$ in $\N_F^{(1)}(Z)$ which are not in $\Lambda_i$ for $i = 1,2,3,4$ contribute, by Lemma \ref{redundancy lemma}, 
\begin{equation} \label{D3 sixth total} \left(1 - \frac{1}{m_1} - \frac{1}{m_2} - \frac{1}{m_3} - \frac{1}{m_4} + \frac{3}{m} \right) A_F Z^{\frac{2}{d}} + O_{F,\ep} \left(Z^{\beta_F + \ep}\right) \end{equation}
terms to $\R_F(Z)$. It then follows from (\ref{D3 total}), (\ref{D3 half total}), (\ref{D3 third total}), (\ref{D3 sixth total}), and Lemma \ref{EFZ lemma} that
\[R_F(Z) = \left(1 - \frac{1}{2m_1} - \frac{1}{2m_2} - \frac{1}{2m_3} - \frac{2}{3m_4} + \frac{4}{3m} \right) A_F Z^{\frac{2}{d}} + O_{F,\ep} \left(Z^{\beta_F + \ep}\right) \]
as required. \\

When $\Aut F$ is equivalent to $\DD_6$ the analysis is the same as for $\DD_3$ taking into account the fact that $\Aut F$ contains $-I$ and so the weighting factor $W_F$ is one half of what it is when $\Aut F$ is equivalent to $\DD_3$. This completes the proof of Theorems \ref{Rep by BF MT} and \ref{WF value theorem}.  

%%%%%%%%%%%%%%%%%%%%%%%%%%%%%%%%%%%%
\section{Proof of Corollary \ref{binomial} }
\label{binom and biquad}
%%%%%%%%%%%%%%%%%%%%%%%%%%%%%%%%%%
 We first determine $W_F$. By Lemma \ref{binomial lemma}, if $a/b$ is not the $d$-th power of a rational then when $d$ is odd $\Aut F$ is equivalent to $\CC_1$ and, by Theorem \ref{WF value theorem}, $W_F = 1$ while when $d$ is even we have $m=1$ and $\Aut F$ is conjugate to $\DD_2$ and so by Theorem \ref{WF value theorem} we have $W_F = \dfrac{1}{4}$. Suppose that
\[\frac{a}{b} = \frac{A^d}{B^d} \]
with $A$ and $B$ coprime non-zero integers. If $d$ is odd then $\Aut F$ is equivalent to $\DD_1$ by Lemma \ref{binomial lemma}. Notice that
\[\begin{pmatrix} 0 & A/B \\ B/A & 0 \end{pmatrix} = \frac{1}{AB} \begin{pmatrix} 0 & A^2 \\ B^2 & 0 \end{pmatrix}\]
and that $A^2$ and $B^2$ are coprime integers. Therefore by Lemma \ref{gen det char} we have $m = |AB|$ and $W_F = 1 - \frac{1}{2|AB|}$ when $d$ is odd. If $d$ is even $\Aut F$ is equivalent to $\DD_4$ with $m_1 = 1, m_2 = m_3 = m = |AB|$ and by Theorem \ref{WF value theorem} we have
\[W_F = \frac{1}{4} \left(1 - \frac{1}{2|AB|}\right).\]

We now determine $A_F$. We first consider the case $F(x,y) = ax^{2k} + by^{2k}$, with $a$ and $b$ positive. Then
\[A_F = \iint_{ax^{2k} + by^{2k} \leq 1} dx dy.\]
Note that $A_F$ is four times the area of the region with ${ax^{2k} + by^{2k} \leq 1}$ and with $x$ and $y$ non-negative. Make the substitution $ax^{2k} = u, by^{2k} = uv, u,v \geq 0$. Then we see that
\begin{align*} \frac{1}{4} A_F & = \int_0^\infty \int_0^{\frac{1}{v+1}} \frac{1}{4k^2 (ab)^{1/2k} } u^{\frac{1}{k} - 1} v^{\frac{1}{2k} - 1} du dv \\
& = \frac{1}{4k (ab)^{1/2k}} \int_0^\infty \frac{v^{1/2k - 1}}{(1+v)^{1/k}} dv \\
\end{align*}
The above integral is $B(1/2k,1/2k)$ where $B(z,w)$ denotes the Beta function and thus, see $6.2.1$ of \cite{Dav},
\begin{align*}A_F = \frac{1}{k(ab)^{1/2k}} \frac{\Gamma^2(1/2k)}{\Gamma(1/k)}.
\end{align*}

Next, we treat the case $F(x,y) = ax^{2k} - by^{2k}$ with $a$ and $b$ positive. The region $\{(x,y) \in \bR^2 : |F(x,y)| \leq 1\}$ has equal area in each quadrant, so it suffices to estimate the area assuming $x, y \geq 0$. We further divide the region into two, depending on whether $ax^{2k} - by^{2k} \geq 0$ or not. Let $A_F^{(1)}$ denote the area of the region satisfying $x,y \geq 0, 0 \leq F(x,y) \leq 1$. We make the substitutions $ax^{2k} = u, by^{2k} = uv$ with $u,v \geq 0$. Then
\begin{align*}  \frac{1}{8}A_F=A_F^{(1)} & = \iint_{\substack{0 \leq ax^{2k} - by^{2k} \leq 1 \\ x,y \geq 0}} dx dy\\
& = \int_0^1 \int_0^{\frac{1}{1-v}} \frac{1}{4k^2 (ab)^{1/2k}} u^{\frac{1}{k} - 1} v^{\frac{1}{2k} - 1} du dv \\
& = \frac{1}{4k (ab)^{1/2k}} \int_0^1 \frac{v^{1/2k - 1}}{(1 - v)^{1/k}} dv \\
& = \frac{1}{4k (ab)^{1/2k}} \frac{\Gamma(1/2k) \Gamma(1 - 1/k)}{\Gamma(1-1/2k)}. 
\end{align*}

Next, we treat the case when $F(x,y) = ax^{2k+1} + by^{2k+1}$. We put $ax^{2k+1} = u$ and $by^{2k+1} = uv$. We thus obtain
\begin{align*} \frac{A_F}{2} |ab|^{1/(2k+1)} & = \frac{1}{2(2k+1)} \int_{-\infty}^\infty \frac{|v|^{\frac{1}{2k+1} - 1} dv}{|1 + v|^{2/(2k+1)}} \\
& = \frac{1}{2(2k+1)} \left( \int_{0}^\infty \frac{v^{-2k/(2k+1)} dv}{(1 + v)^{2/(2k+1)}} + \int_{0}^1 \frac{v^{-2k/(2k+1)}dv}{(1 - v)^{2/(2k+1)}} + \int_1^\infty \frac{v^{-2k/(2k+1)}dv}{(1-v)^{2/(2k+1)}}\right) \\
& = \dfrac{1}{2(2k+1)} \left(\dfrac{\Gamma^2\left(\frac{1}{2k+1}\right)}{\Gamma\left(\frac{2}{2k+1} \right)} +  \dfrac{\Gamma\left(\frac{1}{2k+1} \right) \Gamma\left(\frac{2k-1}{2k+1} \right)}{\Gamma\left(\frac{2k}{2k+1}\right)} + \frac{\Gamma\left(\frac{2k-1}{2k+1} \right) \Gamma \left(\frac{1}{2k+1} \right)}{\Gamma \left( \frac{2k}{2k+1} \right)}\right).
\end{align*}

\end{document}